\newtheorem{thm}{Theorem}
\newtheorem{lem}[thm]{Lemma}
\newtheorem{cor}[thm]{Corollary}
\newtheorem{prop}[thm]{Proposition} 
\newtheorem{rem}[thm]{Remark}
\newtheorem{examp}[thm]{Example}
\date{}
\begin{document}
\setlength{\baselineskip}{16pt}
\title{Continuous Analogues for the Pochhammer Symbol}
\author{Rafael D\'\i az}
 \maketitle
\begin{abstract}
We develop continuous analogues for the Pochhammer symbol following the line of research initiated by D\'iaz and Cano on the construction of continuous analogues for combinatorial objects.
\end{abstract}

\section{Introduction}

Let $R$ be a semiring, the Pochhammer symbol  $\ r(x,y,n) \ $ is given by
$$r(x,y,n)\ = \ \prod_{l=0}^{n-1}(x+ly) \ = \ x(x+y)(x+2y)\cdots (x+(n-1)y).$$
It defines a function $\ r: R  \times R \times \mathbb{N}  \rightarrow R \ $ sending
$\ (x,y,n) \ $ to $\ r(x,y,n), \ $ where by convention  $\ r(x,y,0)=1. \ $ Among the many prominent usages in the literature of the symbol $\ r(x,y,n)\ $  we single out just a few:

\begin{description}
\item[a)] \underline{Factorial numbers:} \ \ \ $n!=r(n,-1,n)=r(1,1,n)=1\cdot 2\cdots (n-1) \cdot n.$
\item[b)] \underline{Rising factorials:}\ \ \ $(x)_{\overline{n}}=r(x,1,n)= x(x+1)(x+2)\cdots (x+(n-1)).$
\item[c)] \underline{Lowering factorials:}\ \ \ $(x)_{\underline{n}}=r(x,-1,n)= x(x-1)(x-2)\cdots (x-(n-1)).$
\item[d)] \underline{Double factorials:} \ \ \ $(2n-1)!!= r(1,2,n)=r(2n-1,-2,n-1)=(2n-1)\cdots 3\cdot 1.$
\item[e)] \underline{Pochhammer $k$-symbol:}\ \ \ $(x)_{n,k}=r(x,k,n)=x(x+k)(x+2k)\cdots (x+(n-1)k).$
\end{description}

Our goal in this work is to introduce  continuous analogues for the function
$\ r(x,y,n), \ $ specially in regard to the third variable $\ n\in \mathbb{N}. \ $ Doing this demands further properties from the semiring $R,$ thus we work over the reals $\ \mathbb{R}. \ $ A continuous extension of  $\ r(x,y,n)\ $  from the domain   $\ \mathbb{R}_{>0} \times\mathbb{ R }_{>0}\times \mathbb{N} \  $ to the domain $(\mathbb{C}\setminus \mathbb{R}_{\leq 0}) \times\mathbb{ R }_{>0}\times \mathbb{R}_{\geq 0}\ $ is given by:
\begin{equation}\label{30114a}
r(x,y,z) \ = \    \dfrac{\Gamma_y(x+yz)}{\Gamma_y(x)} \ = \ y^z\frac{\Gamma(\frac{x}{y}+z)}{\Gamma(\frac{x}{y})}\ = \ y^zr(\frac{x}{y},1,z),
\end{equation}
where \cite{dp1} the $y$-Gamma function $\ \Gamma_y \ $  for $\ y>0 \ $ is given by
$$\Gamma_y(x) \ = \ \lim_{n\rightarrow \infty} \ \dfrac{n!\ y^n  (ny)^{\frac{x}{y}-1}}{r(x,y,n)}
\ \ \ \ \mbox{for} \ \ \ x\in  \mathbb{C}\setminus y\mathbb{Z}_{\leq 0}, \ \ \ \ \ \
\ \ \ \ \ \ \ \ \ $$
$$\Gamma_y(x) \ = \ \int_{0}^{\infty}t^{x-1}e^{\frac{-t^y}{y}}dt \ \ \ \ \mbox{for} \ \ \
\mathrm{Re}(x)>0.\ \ \ \ \ \ \ \ \ \ \ \ \ \ \ \ \ \ \ \ \ \ \ \    $$

We actually study a couple of functions $\ \widetilde{r}(x,y,n) \ $ and $\ \rho(x,y,z)\ $ that may be both regarded as continuous analogues of the function $\ r(x,y,n), \ $ in the sense that they solve continuous "counting" problems analogous to the discrete counting problem solved by $\ r(x,y,n); \ $ this work follows the line of research opened in \cite{cd1, cd2} and further developed by Cano, Carrillo \cite{cc}, Le, Robins, Vignat, Wakhare \cite{lrvw},  O'Dwyer \cite{o}, and Vignat, Wakhare \cite{vw}. In a nutshell, the idea is to construct continuous analogues for combinatorial objects by replacing the operation of counting weighted lattice points in top dimensional convex polytopes, by the operation of computing integrals on convex polytopes. Since the cardinality of many combinatorial objects may be represented as counting weighted lattice points in convex polytopes, this methodology is expected to have a wide range of applications. The continuous analogue obtained depends on the choice of such a representation.

The relationship between the discrete and the continuous quantities built with the methodology above is actually quite subtle and complex. Consider the following simple but illustrative example.
For $\ x>0 \ $ let $\ \Box^2_x\subseteq \mathbb{R}^2 \ $ be the square with vertices $\ (0,0),(0,x),(x,0),(x,x).\ $ We have that $\ \mathrm{area}(\Box^2_x)=x^2\ $ and $\ |\mathbb{Z}^2\cap \Box^2_x|=\lceil x \rceil^2, \ $ thus $$\ \  \displaystyle \lim_{x\to \infty}\frac{|\mathbb{Z}^2\cap \Box^2_x|}{\mathrm{area}(\Box^2_x)} =1 \ \ \ \ \mbox{and} \ \ \ \ \displaystyle \lim_{x\to 0}\frac{|\mathbb{Z}^2\cap \Box^2_x|}{\mathrm{area}(\Box^2_x)} =\infty . \ \ $$ So in this example replacing "counting lattice points" by "computing volumes" is not a drastic change for large volume polytopes, but may actually be quite significative for small volume polytopes.  This line of thinking goes back to Ehrhart \cite{ehrhart}; for further information the reader may consult \cite{bar, blve,dr,lrvw} and the references therein.

For our current purposes, we must consider no just a single polytope, but rather sums over  finite o even infinite families of polytopes;  moreover we are forced to consider polytopes of various sizes, so no straightforward a priori relation between the discrete and its continuous analogues should be expected.

This work is organized as follows. In Section 2\  we describe the Pochhammer symbol in terms of counting weighted lattice points in convex polytopes; in Section 3 we introduced a first continuous analogue for the Pochhammer symbol proceeding first by analogy and then by extension; in Section 4 we introduce a second continuous analogue for the Pochhammer proceeding twice by analogy.

\section{Pochhammer Symbol and Convex Polytopes}

We proceed to describe the Pochhammer symbol $\ r(x,y,n)\ $ in terms of counting weighted lattice points in convex polytopes. For $\ n\geq 1\ $ let $\ \mathrm{P}_k[0,n-1]\ $ be the set of subsets of $\ \{0,...,n-1\}\ $ with $\ k \ $ elements. We have that:
$$r(x,y,n)\ = \ \prod_{l=0}^{n-1}(x+ly) \ = \ \sum_{A\subseteq [0,n-1]}r_S\ x^{|A|}y^{n-|A|}
\ = \ \sum_{k=0}^{n} \bigg(\sum_{A\in \mathrm{P}_k[0,n-1]}r_A\bigg) x^{k}y^{n-k},$$
$$\mbox{where} \ \ \ r_A=  \prod_{i\in A^c}i \ \ \
\mbox{for}  \ \ \ A\subseteq [0,n-1]. $$ Therefore the following well-known result holds.

\begin{prop}\label{r2}{\em Let $ \ (x,y,n) \in  \mathbb{R} \times\mathbb{ R }\times \mathbb{N}, \ $ then
$\ \ \displaystyle r(x,y,n) =  \sum_{k=0}^{n} r_{n,k}\ x^{k}y^{n-k},$
$$ \mbox{where} \ \ r_{0,0}=1, \ \  \text{and for} \ \ n\geq k>0 \ \ \text{we have that} \ \ r_{n,0}=0, \  \
\mbox{and} $$
$$r_{n,k}\ =\  \sum_{ (s_1,...,s_{n-k})  \in  \mathbb{Z}^{n-k} \cap \Delta^{n-k,\ast}_{n-1} } s_1\cdots s_{n-k}  \ \ \ \ \  \ \mbox{with}$$
 $$\ \ \displaystyle \Delta^{n-k,\ast}_{n-1} \ = \ \{(s_1,...,s_{n-k}) \in \mathbb{R}^{n-k} \ | \
0\leq s_1 < \cdots < s_{n-k} \leq n-1 \}.\ $$
}
\end{prop}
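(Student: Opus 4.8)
The plan is to prove this as a formal polynomial identity in the variables $\ x\ $ and $\ y, \ $ since the coefficients $\ r_{n,k}\ $ are defined purely combinatorially and no analytic input is required. First I would expand the product $\ r(x,y,n)=\prod_{l=0}^{n-1}(x+ly)\ $ by the distributive law: each factor $\ (x+ly)\ $ contributes either its $\ x$-summand or its $\ ly$-summand, so a selection of one summand from every factor is recorded by the set $\ A\subseteq [0,n-1]\ $ of indices at which $\ x\ $ is chosen. The resulting monomial is $\ x^{|A|}\prod_{l\in A^c}(ly)=x^{|A|}y^{n-|A|}\prod_{l\in A^c}l. \ $ This is precisely the grouping already displayed just before the statement, giving $\ r(x,y,n)=\sum_{k=0}^{n}r_{n,k}\,x^{k}y^{n-k}\ $ with $\ r_{n,k}=\sum_{A\in \mathrm{P}_k[0,n-1]}\prod_{l\in A^c}l.\ $

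Next I would rewrite this sum over $\ k$-element subsets $\ A\ $ as a sum over their $\ (n-k)$-element complements $\ B=A^c, \ $ using that complementation is a bijection from $\ \mathrm{P}_k[0,n-1]\ $ onto $\ \mathrm{P}_{n-k}[0,n-1]. \ $ Each $\ (n-k)$-element subset $\ B\ $ of $\ \{0,\dots,n-1\}\ $ is encoded uniquely by listing its elements in increasing order as $\ 0\leq s_1<\cdots<s_{n-k}\leq n-1, \ $ that is, as the unique integer point of $\ \Delta^{n-k,\ast}_{n-1}\ $ whose coordinates form the set $\ B. \ $ Under this encoding $\ \prod_{l\in B}l=s_1\cdots s_{n-k}, \ $ and the correspondence $\ B\leftrightarrow (s_1,\dots,s_{n-k})\ $ is a bijection between $\ \mathrm{P}_{n-k}[0,n-1]\ $ and $\ \mathbb{Z}^{n-k}\cap\Delta^{n-k,\ast}_{n-1}. \ $ Substituting this into the expression for $\ r_{n,k}\ $ yields the claimed lattice-point formula.

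Finally I would dispose of the boundary cases. For $\ n=0\ $ the empty product gives $\ r(x,y,0)=1, \ $ and the only (empty) index set forces $\ r_{0,0}=1. \ $ For $\ n\geq 1\ $ and $\ k=0\ $ the single contributing set is $\ A=\emptyset, \ $ so $\ A^c=\{0,\dots,n-1\}\ $ contains $\ 0\ $ and hence $\ \prod_{l\in A^c}l=0, \ $ giving $\ r_{n,0}=0; \ $ equivalently, the sole integer point of $\ \Delta^{n,\ast}_{n-1}\ $ is $\ (0,1,\dots,n-1), \ $ whose coordinate product vanishes. There is no genuine obstacle in this argument, as it is a bookkeeping identity; the only point demanding care is verifying that the map sending a subset to its increasingly ordered tuple is the advertised bijection onto the lattice points of $\ \Delta^{n-k,\ast}_{n-1}, \ $ including the degenerate $\ k=0\ $ case in which $\ 0\ $ necessarily appears as a coordinate and annihilates the product.
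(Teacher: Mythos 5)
Your proposal is correct and follows essentially the same route as the paper, which proves the proposition via the displayed expansion $r(x,y,n)=\sum_{A\subseteq[0,n-1]}r_A\,x^{|A|}y^{n-|A|}$ with $r_A=\prod_{i\in A^c}i$, grouping by $|A|=k$ and implicitly identifying $(n-k)$-element complements with the integer points of $\Delta^{n-k,\ast}_{n-1}$. You merely make the subset-to-increasing-tuple bijection and the degenerate cases $n=0$ and $k=0$ explicit, which the paper leaves as immediate.
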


\begin{examp}\label{r3}{\em \ \\
a) $n!=r(1,1,n)=\sum_{k=0}^{n} r_{n,k}. $\\
b)  $(x)_{\overline{n}}=r(x,1,n)= \sum_{k=0}^{n} r_{n,k}x^{k}. \ \ \ $ \\
c) $(x)_{\underline{n}}=r(x,-1,n)=\sum_{k=0}^{n} (-1)^{n-k}r_{n,k} x^{k}=\sum_{k=0}^{n} s_{n,k} x^{k}.$ \\
Furthermore $x^n=\sum_{k=0}^{n} S_{n,k} r(x,-1,k).$\\
d) $(2n-1)!!=r(1,2,n)=\sum_{k=0}^{n} r_{n,k} 2^{n-k}.$
}
\end{examp}

The coefficients $\ r_{n,k}\ $ are the unsigned Stirling numbers of the first kind, they count the number of permutations of an $n$-element set with $k$ cycles. The Stirling numbers of the first kind are given by $\ s_{n,k}=(-1)^{n-k}r_{n,k};\ $ the Stirling numbers of the second kind $\ S_{n,k} \ $ are defined via the identities
$\  \sum_{k\leq l \leq n}s_{n,l}S_{l,k}  =  \delta_{n,k}  =   \sum_{k\leq l \leq n}S_{n,l}s_{l,k};\ $ the number
 $\ |S_{n,k}| \ $ counts partitions of a $n$-element set with $k$ blocks.

Note that
$\ \  \Delta^{n-k,\ast}_{n-1} \ \subseteq \ \Delta^{n-k}_{n-1} \ = \ \{(s_1,...,s_{n-k}) \in \mathbb{R}^{n-k} \ | \
0\leq s_1 \leq \cdots \leq s_{n-k} \leq n-1 \}.\ \ $  By Proposition \ref{r3} \ the Pochhammer symbol $\ r(x,y,n) \ $ is a finite sum where each summand is itself a weighted  sum over the lattice points on the convex polytope $\ \Delta^{n-k}_{n-1} , \ $ excluding some boundary points.
Integrals of continuous functions over $\ \Delta^{n-k}_{n-1} \ $ do not change if some boundary components are removed, thus to introduce our continuous analogues we work with $\ \Delta^{n-k}_{n-1}\ $ instead of $\ \Delta^{n-k,\ast}_{n-1}.$\\

Next result is key for the rest of this work. For $\ k\in \mathbb{N}_{>0}\ $ and  $\ x\in \mathbb{R}_{\geq 0} \ $ set
$$\ \Delta^k_x=\{(s_1,...,s_{k}) \in \mathbb{R}^{k} \ | \
0\leq s_1 \leq \cdots \leq s_{k} \leq x \} \ \ \ \text{and}
\ \ \  \displaystyle   a_{x,k}=   \int_{\Delta^k_x} s_1\cdots s_k \ ds_1 \cdots ds_k.$$

\begin{lem}\label{r1}{\em
  For  $\ x\in \mathbb{R}_{\geq 0},\ k\in \mathbb{N},\ $ setting $\ \mathrm{vol}(\Delta^0_x)=1 \ $ and $\ a_{x,0}=1\ $ we have that:
$$\mathrm{vol}(\Delta^k_x)  =   \dfrac{x^k}{k!} \ \ \ \ \ \ \ \  \mbox{and} \ \ \ \ \ \ \ \
a_{x,k} =  \mathrm{vol}(\Delta^k_{\frac{x^2}{2}})     =  \dfrac{x^{2k}}{2^k \ k!}  =
(2k-1)!!\frac{x^{2k}}{(2k)!}.$$

}
\end{lem}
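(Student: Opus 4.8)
The plan is to treat the two formulas separately: first establish the volume of the ordered simplex, and then reduce the weighted integral $a_{x,k}$ to that volume by a single change of variables.

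For the volume, I would note that the cube $[0,x]^k$ decomposes, up to a set of measure zero (the diagonal hyperplanes $s_i=s_j$), into the $k!$ congruent pieces obtained by permuting coordinates, each a copy of $\Delta^k_x$. Since Lebesgue measure is invariant under coordinate permutations, the pieces have equal volume, whence $\mathrm{vol}(\Delta^k_x)=\tfrac{1}{k!}\mathrm{vol}([0,x]^k)=\tfrac{x^k}{k!}$. Equivalently this follows by an immediate induction using Fubini, namely $\mathrm{vol}(\Delta^k_x)=\int_0^x \mathrm{vol}(\Delta^{k-1}_{s_k})\,ds_k=\int_0^x \tfrac{s_k^{k-1}}{(k-1)!}\,ds_k=\tfrac{x^k}{k!}$, with base case $\mathrm{vol}(\Delta^0_x)=1$.

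The key step for $a_{x,k}$ is the monotone substitution $u_i=\tfrac{s_i^2}{2}$ for $i=1,\dots,k$. This map is strictly increasing on $[0,\infty)$, so it sends the ordered region $0\le s_1\le\cdots\le s_k\le x$ bijectively onto $0\le u_1\le\cdots\le u_k\le \tfrac{x^2}{2}$, i.e. onto $\Delta^k_{x^2/2}$. Since $du_i=s_i\,ds_i$, the weight transforms as $s_1\cdots s_k\,ds_1\cdots ds_k=du_1\cdots du_k$, so
$$a_{x,k}=\int_{\Delta^k_x}s_1\cdots s_k\,ds_1\cdots ds_k=\int_{\Delta^k_{x^2/2}}du_1\cdots du_k=\mathrm{vol}\big(\Delta^k_{x^2/2}\big)=\frac{x^{2k}}{2^k\,k!},$$
using the volume formula just proved. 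The final equality in the statement is then the elementary identity $(2k)!=2^k\,k!\,(2k-1)!!$, obtained by grouping the even and odd factors of $(2k)!$, which gives $\tfrac{x^{2k}}{2^k k!}=(2k-1)!!\,\tfrac{x^{2k}}{(2k)!}$.

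The computation is essentially routine, so the only point deserving care is the justification of the change of variables: the map $s_i\mapsto s_i^2/2$ has vanishing Jacobian along $s_i=0$ and so fails to be a diffeomorphism on all of the closed simplex. This is harmless, because the locus where some $s_i=0$ has measure zero; on the open region $0<s_1<\cdots<s_k<x$ the map is a genuine diffeomorphism onto $0<u_1<\cdots<u_k<x^2/2$, and neither integral sees the removed boundary. One can sidestep this bookkeeping entirely via the induction $a_{x,k}=\int_0^x s_k\,a_{s_k,k-1}\,ds_k$, which reproduces $\tfrac{x^{2k}}{2^k k!}$ directly from the inductive hypothesis.
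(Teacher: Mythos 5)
Your proposal is correct. For the volume formula you offer both the symmetry decomposition of the cube and the Fubini recursion $\mathrm{vol}(\Delta^k_x)=\int_0^x \mathrm{vol}(\Delta^{k-1}_{s_k})\,ds_k$; the latter is exactly what the paper does. For $a_{x,k}$, however, your main argument is genuinely different: the paper proves $a_{x,k}=\int_0^x s_k\,a_{s_k,k-1}\,ds_k$ and unwinds the recursion, so the claimed middle equality $a_{x,k}=\mathrm{vol}\bigl(\Delta^k_{x^2/2}\bigr)$ is only verified a posteriori, by observing that both sides evaluate to $x^{2k}/(2^k k!)$. Your substitution $u_i=s_i^2/2$ proves that middle equality directly and conceptually --- the weighted integral literally \emph{is} the volume of the rescaled simplex --- and then the closed form follows from the volume formula alone. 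This is arguably the more illuminating route, since it explains why the author wrote the chain of equalities in that order; the paper's recursion is more elementary but buys less insight. Your handling of the degenerate Jacobian along $s_i=0$ is the right level of care (measure-zero set, diffeomorphism on the open region), and the identity $(2k)! = 2^k\,k!\,(2k-1)!!$ is correctly justified by splitting even and odd factors. No gaps.
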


\begin{proof}
The results follow from the recursions
  $$\mathrm{vol}(\Delta^k_x) \ = \ \int_{0}^{x}\int_{0}^{s_k}\cdots \int_{0}^{s_2}
ds_1 \cdots ds_k \ = \  \int_{0}^{x} \mathrm{vol}(\Delta^{k-1}_{s_k})\ ds_k \ ; \ \ \ \ \ \  $$
$$ a_{x,k}\ = \ \int_{0}^{x}\int_{0}^{s_k}\cdots \int_{0}^{s_2}
s_1\cdots s_k \ ds_1 \cdots ds_k\ = \
\int_{0}^{x} s_k\ a_{s_k,k-1}\ ds_k  \ .$$
\end{proof}

\section{Continuous Analogue - First Approach}

We are ready to introduce our first continuous analogue for the Pochhammer symbol; in this approach we proceed first by analogy and then by extension. In accordance with the methodology
outlined in the introduction, Proposition \ref{r2}, and Lemma \ref{r1},   we replace the numbers $r_{n,k}$ by their continuous analogues $\ \widetilde{r}_{n,k} \ $  given for $\ n\geq k>0\ $ by
$\ \widetilde{r}_{0,0}=1, \ $ $\ \widetilde{r}_{0,k}=0, \ $ and
$$ \widetilde{r}_{n,k}=a_{n-1,n-k}\ = \ \int_{0}^{n-1}\int_{0}^{s_{n-k}}\cdots \int_{0}^{s_2}
s_1\cdots s_{n-k} \ ds_1 \cdots ds_{n-k}\ = \ \dfrac{(n-1)^{2(n-k)}}{2^{n-k}(n-k)!} \ .$$
 Therefore we introduce the function $\ \widetilde{r}:\mathbb{R} \times\mathbb{ R }\times \mathbb{N}  \rightarrow \mathbb{R}\ $ given by
\begin{equation}\label{e1}
\widetilde{r}(x,y,n)= \sum_{k=0}^{n} \widetilde{r}_{n,k}\ x^{k}y^{n-k}
\end{equation}
as a continuous analogue for the Pochhammer symbol.  So $\ \widetilde{s}_{n,k}=(-1)^{n-k} \widetilde{r}_{n,k} \ $ give analogues for the Stirling numbers of the first kind. Analogues $\ \widetilde{S}_{n,k} \ $ for the Stirling numbers of the second kind  are defined by the identities $$\sum_{k\leq l \leq n}\widetilde{s}_{n,l}\widetilde{S}_{l,k} \ = \ \delta_{n,k} \ = \  \sum_{k\leq l \leq n}\widetilde{S}_{n,l}\widetilde{s}_{l,k}.$$

Since $\ \widetilde{r}_{n,k}, \ \widetilde{s}_{n,k}, \ \widetilde{S}_{n,k} \ $ are rational numbers, we search for a combinatorial interpretation of them in terms of finite groupoids \cite{bd,d}.

A finite groupoid is a category with a finite set of morphisms, all of them invertible.
The cardinality $\ |G| \ $ of a finite groupoid $\ G \ $ is the rational number given by
$$|G| =  \sum_{\overline{a}\in \mathrm{ob}(G)_{\simeq}}\frac{1}{|G(a,a)|},$$
where $\ \mathrm{ob}(G)_{\simeq} \ $ is the quotient of the set $\ \mathrm{ob}(G) \ $ of objects in $\ G \ $ by the relation $\ \simeq, \ $ where two objects are related if and only if they are isomorphic; $\ G(a,a) \ $ is the set of morphism from object $a$ to itself, and $\ |G(a,a)|\ $ its cardinality.  If group $\ G\ $ acts on the finite set $\ X, \ $ we let $\ X_G\ $ be the groupoid such that $\ \mathrm{ob}(X_G)=X,\ $ and the set $\ X_G(a,b)\ $ of morphisms from $\ a \ $ to $\ b \ $ in  $\ X_G\ $ is given by $\ X_G(a,b)= \{g\in G \ | \ ga=b \}. \ $ Composition of morphisms in  $\ X_G \ $ is given by the product on $\ G. \ $  We shall use the well-known  fact $ \  \displaystyle |X_G| =  \frac{|X|}{|G|}. \ $ \\

Let us introduce finite groupoids $\ G_{n,k}, \ G^e_{n,k}, \ G^o_{n,k}.  \ $ Set $\ [n-k]=\{1,...,n-k\}, \ $ $\ -[n-k]=\{-(n-k),...,-1\}, \ $
$\ \mathbb{Z}_2 = \{-1, 1\}, \ $ and $$\ S_{n-k}= \{\sigma:[n-k]\to [n-k] \ | \ \sigma \  \text{bijection}\}. \ $$ Let $\ \mathbb{Z}_2^{n-k}\rtimes S_{n-k} \ $  be the group
with the semidirect product  $$ \ (a,\sigma)(b,\tau)=(a\sigma(b),\sigma\tau) \ \ \text{with} \ \ \sigma(b)_i=b_{\sigma^{-1}(i)}. \ $$ The group $\ \mathbb{Z}_2^{n-k}\rtimes S_{n-k} \ $
acts on $\ -[n-k] \sqcup [n-k]\ $ via $\ (a,\sigma)(\pm i)=\pm a_{\sigma(i)}\sigma(i) \ $ for $\ (a,\sigma)\in \mathbb{Z}_2^{n-k}\rtimes S_{n-k}, \ i \in [n-k]. \ $
We let $\ G_{n,k} \ $ for $\ n> k >0 \ $ be the groupoid given by
$$ G_{n,k} \ = \ \mathrm{Map}(-[n-k]\sqcup [n-k], [n-1])_{\mathbb{Z}_2^{n-k}\rtimes S_{n-k}} ,$$
i.e. an object in $\  G_{n,k} \ $ is a map $\ f:-[n-k]\sqcup [n-k] \to [n-1], \ $ and a morphism
in $\ G_{n,k} \ $ from $f$ to $g$ is an element $(a, \sigma) \in \mathbb{Z}_2^{n-k}\rtimes S_{n-k}$ such that $\ (a, \sigma)f=g, \ $ where $$
\big((a, \sigma)f\big)(\pm i)=f\big((a, \sigma)^{-1}(\pm i)\big)=f\big((\sigma^{-1}(a),\sigma^{-1})(\pm i)\big)=
f(\pm a_i\sigma^{-1}(i))\ \  \text{for} \ \ i \in [n-k]. $$ By definition we have that $$|G_{n,k}|=\frac{(n-1)^{2(n-k)}}{2^{n-k}(n-k)!}.  $$
Let the gruopoid $\ G^e_{n,k} \ $ for $\ n\geq k >0 \ $ be given by $\ G^e_{n,k}=(X^e_{n,k})_{\mathbb{Z}_2^{n-k}\rtimes S_{n-k}}, \ $ where an element of $\ X^e_{n,k} \ $ is an ordered tuple $\ (A_1,...,A_l;f) \ $ such that $\ l\geq 1 \ $ is an even number,  $\ \{A_1,...,A_l\} \ $ is a partition of $\ [n-k], \ $ and
$\ f: -[n-k] \sqcup [n-k] \to [n-1] \ $ is a map such that $$f(\pm s) \ \ \ \text{belongs to} \ \ \ \coprod_{j\leq i}A_j \ \sqcup \ [n-k+1,n-1]   \ \ \ \text{for all} \ \ \
s\in A_i.$$
For $\ \sigma\in S_{n-k} \ $ we let $\ \widehat{\sigma} \in S_{n-1} \ $ be the permutation given by $\ \widehat{\sigma}(i)=\sigma(i) \ $ if $\ i\in[n-k], \ $ and
$\ \widehat{\sigma}(i)=i \ $ if $\ i\in[n-k+1,n-1]. \ $ The group
$\ \mathbb{Z}_2^{n-k}\rtimes S_{n-k}\ $ acts on $\ X^e_{n,k}\ $ as follows
$$\ (a,\sigma)(A_1,...,A_l;f) \ = \ (\sigma(A_1),...,\sigma(A_l); \widehat{\sigma} f(a,\sigma)^{-1}), \ \ \ \text{where}$$
$$\  \widehat{\sigma} f(a,\sigma)^{-1}(\pm i)=\widehat{\sigma}
\big(f(\pm a_i \sigma^{-1}(i)) \big). \ $$ By definition we have that
$$|G^e_{n,k}|=\frac{1}{2^{n-k}(n-k)!}\sum^{n-k}_{\substack{ l=1 \\
l \ \mathrm{even} }}\sum_{\substack{ a_1+ \cdots +a_{l}=n-k \\
a_i>0 }}\binom{n-k}{a_1,...,a_l}
(a_1+k-1)^{2a_1} \cdots (a_1+\cdots +a_l+k-1)^{2a_l}.$$

The definition of $\ G^o_{n,k} \ $ follows the same patter as the definition of $\ G^e_{n,k}; \ $
the only change is that in   $\ G^o_{n,k} \ $ we only include tuples $\ (A_1,...,A_l;f)\ $ with
$\ l \ $ an odd number. It follows that
$$|G^o_{n,k}|=\frac{1}{2^{n-k}(n-k)!}\sum^{n-k}_{\substack{ l=1 \\
l \ \mathrm{odd} }}\sum_{\substack{ a_1+ \cdots +a_{l}=n-k \\
a_i>0 }}\binom{n-k}{a_1,...,a_l}
(a_1+k-1)^{2a_1} \cdots (a_1+\cdots +a_l+k-1)^{2a_l}.$$

\begin{thm}\label{2312a}{\em \
\begin{description}
\item[a)] For $ \ n\geq k >0, \ $ we have that  $\ \ \widetilde{s}_{n,k}\ = \ (-1)^{n-k}|G_{n,k}|.$
\item[b)] $\ \widetilde{S}_{0,0}=1, \  \widetilde{S}_{n,0} = 0,\ \widetilde{S}_{n,n} = 1, \ $ and for $ \ n> k >0 \ $   we have that
$$\widetilde{S}_{n,k}\ = \ (-1)^{n-k}\big(|G^e_{n,k}|-|G^o_{n,k}| \big).$$
\end{description}
}
\end{thm}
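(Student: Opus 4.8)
The plan is to handle the two parts by quite different means: part (a) is a direct application of the groupoid cardinality formula, while part (b) is a matrix inversion unwound as a signed sum over chains.

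For part (a) I would simply unwind definitions. By definition $\ \widetilde{s}_{n,k}=(-1)^{n-k}\widetilde{r}_{n,k},\ $ so it suffices to check $\ |G_{n,k}|=\widetilde{r}_{n,k}.\ $ Applying $\ |X_G|=|X|/|G|\ $ to the action groupoid $\ G_{n,k},\ $ whose underlying set $\ \mathrm{Map}(-[n-k]\sqcup[n-k],[n-1])\ $ has $\ (n-1)^{2(n-k)}\ $ elements and whose structure group $\ \mathbb{Z}_2^{n-k}\rtimes S_{n-k}\ $ has order $\ 2^{n-k}(n-k)!,\ $ gives $\ |G_{n,k}|=\frac{(n-1)^{2(n-k)}}{2^{n-k}(n-k)!}=\widetilde{r}_{n,k},\ $ with the boundary case $\ n=k\ $ (empty domain, trivial group) returning $\ 1=\widetilde{r}_{n,n}.\ $

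For part (b) the organizing observation is that the matrices $\ \widetilde{s}=(\widetilde{s}_{n,k})\ $ and $\ \widetilde{S}=(\widetilde{S}_{n,k}),\ $ indexed by $\ n\geq k,\ $ are lower triangular with unit diagonal, and the defining identities assert exactly that $\ \widetilde{S}=\widetilde{s}^{-1}.\ $ Writing $\ \widetilde{s}=I+N\ $ with $\ N\ $ strictly lower triangular, I would expand $\ \widetilde{S}=\sum_{l\geq 0}(-N)^l\ $ (a finite sum in each entry by nilpotency), obtaining
$$\widetilde{S}_{n,k}=\sum_{l\geq 0}(-1)^l\sum_{n=j_0>j_1>\cdots>j_l=k}\widetilde{s}_{j_0,j_1}\cdots\widetilde{s}_{j_{l-1},j_l}.$$
I would then encode each strictly decreasing chain by the composition $\ a_i=j_{i-1}-j_i>0\ $ of $\ m=n-k\ $ into $\ l\ $ parts, substitute $\ \widetilde{s}_{j_{i-1},j_i}=(-1)^{a_i}\frac{(j_{i-1}-1)^{2a_i}}{2^{a_i}a_i!},\ $ and pull out the global sign $\ (-1)^{\sum a_i}=(-1)^{n-k},\ $ the factor $\ 2^{-m},\ $ and $\ \prod_i 1/a_i!=\binom{m}{a_1,\dots,a_l}/m!.\ $ Splitting the surviving alternation $\ (-1)^l\ $ according to the parity of $\ l\ $ reproduces the two sums defining $\ |G^e_{n,k}|\ $ and $\ |G^o_{n,k}|,\ $ giving $\ \widetilde{S}_{n,k}=(-1)^{n-k}(|G^e_{n,k}|-|G^o_{n,k}|).\ $

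The main obstacle is a bookkeeping mismatch in the arguments of the powers. Since $\ j_{i-1}=n-(a_1+\cdots+a_{i-1}),\ $ the chain computation produces $\ j_{i-1}-1=(k-1)+(a_i+\cdots+a_l),\ $ a suffix sum, whereas the formulas for $\ |G^e_{n,k}|,|G^o_{n,k}|\ $ are written with prefix sums $\ (k-1)+(a_1+\cdots+a_j).\ $ I would reconcile the two by the composition-reversing involution $\ (a_1,\dots,a_l)\mapsto(a_l,\dots,a_1),\ $ which fixes the multinomial coefficient and exchanges suffix with prefix sums, so the two sums over compositions agree term for term. Finally I would verify the edge cases: the $\ l=0\ $ chain yields $\ \widetilde{S}_{n,n}=1,\ $ and for $\ k=0\ $ every chain terminates in a factor $\ \widetilde{s}_{j_{l-1},0}=0\ $ (because $\ \widetilde{r}_{\cdot,0}=0\ $), forcing $\ \widetilde{S}_{n,0}=0\ $ for $\ n>0,\ $ in agreement with the stated initial values.
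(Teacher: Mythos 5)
Your proposal is correct and follows essentially the same route as the paper: part (a) is the direct groupoid-cardinality computation $|X_G|=|X|/|G|$, and part (b) is exactly the paper's M\"obius-inversion argument, i.e.\ expanding the inverse of the unitriangular matrix $\widetilde{s}$ as an alternating sum over chains from $k$ to $n$, substituting the closed form of $\widetilde{s}_{n,k}$, and splitting by the parity of the chain length. Your extra step reconciling suffix sums with the prefix sums in the definitions of $|G^e_{n,k}|,|G^o_{n,k}|$ via composition reversal is a correct (and purely cosmetic) piece of bookkeeping that the paper avoids by indexing its chains increasingly.
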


\begin{proof} Item a) is clear. Item b) follows by M$\ddot{\mathrm{o}}$bius inversion  \cite{d,r}:
$$\widetilde{S}_{n,k}\ = \ \sum_{l=1}^{n-k}(-1)^l\sum_{\substack{ i_0< \cdots <i_{l} \\
i_0=k,\  i_{l} =n }} \widetilde{s}_{i_l,i_{l-1}} \cdots \widetilde{s}_{i_1,i_{0}}\ =$$
$$ \sum_{l=1}^{n-k}(-1)^l\sum_{\substack{ i_0< \cdots <i_{l} \\
i_0=k,\  i_{l} =n }}
\frac{(-1)^{i_l-i_{l-1}}(i_l-1)^{2(i_l-i_{l-1})} \cdots (-1)^{i_1-i_{0}}(i_1-1)^{2(i_1-i_{0})}}{2^{i_l-i_{l-1}}(i_l-i_{l-1})! \cdots 2^{i_1-i_{0}}(i_1-i_{0})!}\ = $$
$$\frac{(-1)^{n-k}}{2^{n-k}(n-k)!} \sum_{l=1}^{n-k}(-1)^{l}\sum_{\substack{ i_0< \cdots <i_{l} \\
i_0=k,\  i_{l} =n }}\binom{n-k}{i_1-i_{0},...,i_l-i_{l-1}}
(i_1-1)^{2(i_1-i_{0}) }\cdots (i_l-1)^{2(i_l-i_{l-1})} \ = $$
$$ \frac{(-1)^{n-k}}{2^{n-k}(n-k)!} \sum_{l=1}^{n-k}(-1)^{l}\sum_{\substack{ a_1+ \cdots +a_{l}=n-k \\
a_i>0 }}\binom{n-k}{a_1,...,a_l}
(a_1+k-1)^{2a_1} \cdots (a_1+\cdots +a_l+k-1)^{2a_l}\ = $$
$$(-1)^{n-k}\big(|G^e_{n,k}|-|G^o_{n,k}| \big).$$
\end{proof}

\begin{cor}{\em For $ \ n> k >0,\ $ we have that
$$ \ |G^e_{n,k}|\ + \ |G^o_{n,k}| \ \leq \ \frac{(n-1)^{2(n-k)}S_{n-k}(n-k)}{2^{n-k}(n-k)!}, \ $$
where $\ S_k(n) =  1^k + \cdots + n^k. \ $
}
\end{cor}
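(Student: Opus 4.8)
The plan is to add the two cardinalities, replace every factor in the resulting products by the single largest possible value $(n-1)^{2(n-k)}$, and then recognize what remains as a count of surjections that is trivially dominated by a count of all maps. Throughout write $m=n-k\geq 1$. Since $|G^e_{n,k}|$ sums over even $l$ and $|G^o_{n,k}|$ over odd $l$, adding them merges the two sums into one over all $1\leq l\leq m$, giving
$$|G^e_{n,k}|+|G^o_{n,k}| \ = \ \frac{1}{2^{m}m!}\sum_{l=1}^{m}\ \sum_{\substack{ a_1+\cdots+a_{l}=m \\ a_i>0 }}\binom{m}{a_1,\ldots,a_l}\prod_{j=1}^{l}\big(a_1+\cdots+a_j+k-1\big)^{2a_j}.$$

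Next I would bound the inner product factor by factor. For each $j$ the partial sum satisfies $a_1+\cdots+a_j\leq a_1+\cdots+a_l=m$, so the base obeys $1\leq a_1+\cdots+a_j+k-1\leq m+k-1=n-1$; since the exponents $2a_j$ are nonnegative this yields $\big(a_1+\cdots+a_j+k-1\big)^{2a_j}\leq (n-1)^{2a_j}$, and multiplying over $j$ gives
$$\prod_{j=1}^{l}\big(a_1+\cdots+a_j+k-1\big)^{2a_j}\ \leq\ (n-1)^{2(a_1+\cdots+a_l)}\ =\ (n-1)^{2m}.$$
Pulling the uniform factor $(n-1)^{2m}$ out front reduces the corollary to the purely combinatorial inequality $\sum_{l=1}^{m}\sum_{a_1+\cdots+a_l=m,\ a_i>0}\binom{m}{a_1,\ldots,a_l}\leq S_m(m)$.

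Finally I would evaluate the surviving sum combinatorially. For fixed $l$, the quantity $\sum_{a_1+\cdots+a_l=m,\ a_i>0}\binom{m}{a_1,\ldots,a_l}$ counts the ordered partitions of $[m]$ into $l$ nonempty blocks, that is, the maps $f:[m]\to[l]$ that are onto (taking $B_j=f^{-1}(j)$). Every such surjection is in particular a map $[m]\to[l]$, of which there are exactly $l^{m}$; hence this inner sum is at most $l^{m}$. Summing over $l$ gives $\sum_{l=1}^{m}\sum_{a_1+\cdots+a_l=m,\ a_i>0}\binom{m}{a_1,\ldots,a_l}\leq\sum_{l=1}^{m}l^{m}=S_m(m)$, and feeding this back into the previous paragraph produces the asserted bound with $m=n-k$.

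The argument is entirely elementary. The only place asking for care is the factorwise estimate, where one must note that it is the partial sums $a_1+\cdots+a_j$, and not the individual $a_j$, that occur in the bases, and that each such partial sum is at most $m$; once this is observed, the domination of surjections by arbitrary maps finishes the proof with no further effort.
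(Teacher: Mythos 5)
Your proof is correct and follows essentially the same route as the paper: bound each base $a_1+\cdots+a_j+k-1$ by $n-1$ so the product is at most $(n-1)^{2(n-k)}$, then bound the remaining multinomial sum by $l^{n-k}$ and sum to get $S_{n-k}(n-k)$. The only cosmetic difference is that you dominate the surjection count by the count of all maps, while the paper simply drops the positivity constraint $a_i>0$ to evaluate the multinomial sum as exactly $l^{n-k}$; both yield the same bound.
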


\begin{proof}
$ \ \ \ \displaystyle |G^e_{n,k}|\ + \ |G^o_{n,k}|\ \leq \ \frac{(n-1)^{2(n-k)}}{2^{n-k}(n-k)!} \sum_{l=1}^{n-k}\sum_{\substack{ a_1+ \cdots +a_{l}=n-k }}\binom{n-k}{a_1,...,a_l}
\ = $
$$\frac{(n-1)^{2(n-k)}}{2^{n-k}(n-k)!} \sum_{l=1}^{n-k}l^{n-k}\ = \ \frac{(n-1)^{2(n-k)}S_{n-k}(n-k)}{2^{n-k}(n-k)!},$$
where $\ S_k(n)=1^k+ \cdots+ n^k.$

\end{proof}

\begin{rem}{\em  The function $\ \displaystyle S(x,y)=\int_{0}^{x}t^ydt=\frac{x^{y+1}}{y+1}\ $ arises as a continuous analogue of the $k$-power sum
$\ \ \displaystyle S_k(n)=\sum_{l=0}^{n}l^k=\frac{n^{k+1}}{k+1}+\frac{1}{k+1}\sum_{i=1}^{k}B_in^{k+1-i} \ \ $
for $\ k\in \mathbb{N}_{\geq 1}, \ $  where  the Bernoulli numbers $\ B_i \in \mathbb{Q}\ $ are defined \cite{aar} via the identity
$\ \displaystyle \frac{t}{1-e^{-t}} = \sum_{n=0}^{\infty}B_n\frac{t^n}{n!}.\ $   The difference between $\ S_k(n) \ $ and  $\ \displaystyle S(n,k)=\frac{n^{k+1}}{k+1}\  $  is a polynomial of degree $\ k, \ $ so $$\ \lim_{n\to \infty}\frac{\sum_{k=0}^{n}n^k}{\int_{0}^{n}t^kdt}  \ = \
\lim_{n\to \infty}\frac{S_k(n)}{S(n,k)} \ = \
\lim_{n\to \infty}\frac{\frac{n^{k+1}}{k+1}+\frac{1}{k+1}\sum_{i=1}^{r}B_in^{k+1-i} }{\frac{n^{k+1}}{k+1}}\ = \ 1.$$

}
\end{rem}
\noindent For $\  n\in \mathbb{N}\ $ set $\  \displaystyle e_{n}(x)=\sum_{k=0}^{n}\dfrac{x^k}{k!}.\ $

\begin{thm}\label{r4}{\em For each $\ n\in \mathbb{N},\ $
the function $\ \widetilde{r}(x,y,n) : \mathbb{R} \times\mathbb{ R } \rightarrow \mathbb{R}\ $ is continuous. We have $ \ \widetilde{r}(x,y,0)=1, \ $ $ \ \widetilde{r}(x,y,1)=x; \ $ for $\ n> 0 \ $ we have $\ \widetilde{r}(x,0,n)=x^n \ $ and  $\  \widetilde{r}(0,y,n)=0; \ $  for $ \ n> 0,\ x \neq 0  \ $ we have  $$\widetilde{r}(x,y,n)= x^ne_{n-1}\bigg(\frac{y(n-1)^2}{2x}\bigg).$$
}
\end{thm}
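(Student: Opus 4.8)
The plan is to extract the entire statement directly from the defining expression \eqref{e1}, feeding in the closed form for the coefficients $\widetilde{r}_{n,k}$ furnished by Lemma \ref{r1}. Continuity needs no real argument: for each fixed $n$, equation \eqref{e1} exhibits $(x,y)\mapsto\widetilde{r}(x,y,n)$ as a polynomial in the two real variables $x$ and $y$, hence as a continuous function on all of $\mathbb{R}\times\mathbb{R}$. So the only content lies in evaluating the various specializations and in recognizing the truncated exponential $e_{n-1}$.

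Next I would dispose of the low-order and boundary cases by substitution into \eqref{e1}. For $n=0$ only the term $\widetilde{r}_{0,0}x^0y^0=1$ survives, giving $\widetilde{r}(x,y,0)=1$. For $n=1$ the relevant coefficients are $\widetilde{r}_{1,1}=a_{0,0}=1$ and $\widetilde{r}_{1,0}=0$, whence $\widetilde{r}(x,y,1)=x$; here the value $\widetilde{r}_{n,0}=0$ for $n>0$ is inherited from the discrete convention $r_{n,0}=0$ of Proposition \ref{r2}. For general $n>0$, setting $y=0$ annihilates every term except $k=n$, and since $\widetilde{r}_{n,n}=a_{n-1,0}=1$ this yields $\widetilde{r}(x,0,n)=x^n$; setting $x=0$ annihilates every term except $k=0$, and since $\widetilde{r}_{n,0}=0$ this yields $\widetilde{r}(0,y,n)=0$.

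The substantive claim is the closed form for $n>0$ and $x\neq 0$. I would substitute the explicit value $\widetilde{r}_{n,k}=a_{n-1,n-k}=\dfrac{(n-1)^{2(n-k)}}{2^{n-k}(n-k)!}$ from Lemma \ref{r1} into \eqref{e1}, drop the vanishing $k=0$ term, and factor out $x^n$:
$$\widetilde{r}(x,y,n)=\sum_{k=1}^{n}\frac{(n-1)^{2(n-k)}}{2^{n-k}(n-k)!}\,x^{k}y^{n-k}=x^n\sum_{k=1}^{n}\frac{1}{(n-k)!}\left(\frac{(n-1)^2y}{2x}\right)^{n-k}.$$
Reindexing by $j=n-k$, which ranges over $0,\dots,n-1$, converts the sum into $\sum_{j=0}^{n-1}\frac{1}{j!}\left(\frac{y(n-1)^2}{2x}\right)^{j}=e_{n-1}\!\left(\frac{y(n-1)^2}{2x}\right)$, which is precisely the asserted identity.

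There is essentially no analytic obstacle here: the whole theorem is an algebraic rearrangement of \eqref{e1} together with the evaluation of $a_{x,k}$ from Lemma \ref{r1}. The only point demanding genuine care is the bookkeeping of the edge coefficients—registering $a_{0,0}=a_{n-1,0}=1$ and, above all, fixing $\widetilde{r}_{n,0}=0$ for $n>0$, a value not spelled out in the definition but forced both by analogy with $r_{n,0}=0$ and by the required boundary value $\widetilde{r}(0,y,n)=0$. As a consistency check one may note that, although the closed form is stated only for $x\neq 0$, letting $x\to 0$ in $x^n e_{n-1}(y(n-1)^2/2x)$ gives $0$, in agreement with $\widetilde{r}(0,y,n)=0$.
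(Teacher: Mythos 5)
Your proposal is correct and follows essentially the same route as the paper's own proof: continuity from the polynomial form of \eqref{e1}, the boundary cases by direct substitution, and the closed form by inserting $\widetilde{r}_{n,k}=\frac{(n-1)^{2(n-k)}}{2^{n-k}(n-k)!}$, factoring out $x^n$, and reindexing to recognize $e_{n-1}$. Your remark about the convention $\widetilde{r}_{n,0}=0$ for $n>0$ is a fair reading of the paper's (slightly garbled) definition and matches how the paper's proof implicitly starts the sum at $k=1$.
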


\begin{proof} $ \ \widetilde{r}(x,y,n) \ $ is an homogeneous polynomial  of degree $\ n, \ $ thus continuous. \\
By (\ref{e1})  we get
$ \ \widetilde{r}(x,y,0)= \widetilde{r}_{0,0}=1, \ \  \ \widetilde{r}(x,y,1)= \widetilde{r}_{1,1}x=x; \ \ $ for $\ n>0, \ $
we get $ \ \widetilde{r}(0,y,n)=0 \ $ and $ \ \widetilde{r}(x,0,n)=a_{n-1,0}x^n=x^n. \ $  For $\ n>0,\ x\neq 0, \ $ we have
$$\widetilde{r}(x,y,n)\ = \ \sum_{k=0}^{n} \widetilde{r}_{n,k}\ x^{k}y^{n-k} \ = \
\sum_{k=1}^{n} \dfrac{(n-1)^{2(n-k)}}{2^{n-k}(n-k)!}\ x^{k}y^{n-k}\ = $$
$$x^n\sum_{k=1}^{n} \frac{\big(\frac{y(n-1)^2}{2x}\big)^{n-k}}{(n-k)!}\ = \
x^ne_{n-1}\bigg(\frac{y(n-1)^2}{2x} \bigg) .$$
\end{proof}

\begin{examp}\label{1812b}{\em We list some properties of $\ \widetilde{r}(x,y,n).\ $   Let $\ n\in \mathbb{N}_{>0}\ $ and $\ a\neq 0.$\\
a) $\widetilde{r}(1,1,n)=e_{n-1}\big(\frac{(n-1)^2}{2} \big). \ \ \ \ \ \ \ \ \ \ \ \ \ \
\ \ \ \ \ \ \ \ \  b) \ \widetilde{r}(1,2,n)=e_{n-1}\big((n-1)^2\big). $ \\
c) $\widetilde{r}(x,1,n)=x^ne_{n-1}\big(\frac{(n-1)^2}{2x}\big). \ \ \ \ \ \ \ \ \ \
\ \ \ \ \  \ \ \ \ \
d)\ \widetilde{r}(x,-1,n)=x^ne_{n-1}\big(-\frac{(n-1)^2}{2x} \big). $\\
e) $\widetilde{r}((n-1)^2,2y,n)=(n-1)^{2n}e_{n-1}(y). \ \ \ \ \ \ \
f) \ \widetilde{r}(x,2(n-1)^{-2},n)=x^{n}e_{n-1}(x^{-1}).$ \\
g) $\widetilde{r}(ax,ay,n)=a^n\widetilde{r}(x,y,n). \ \ \ \ \ \ \ \ \ \ \ \
\ \ \ \ \ \ \ \ \
h)\ \widetilde{r}(x,y,n)=y^n\widetilde{r}(\frac{x}{y},1,n).$
}
\end{examp}

\begin{cor}\label{80124d}
{\em From properties \ e) \  and \ f) \ of \ Example \ref{1812b} \  we get:
  $$\lim_{n\to \infty}\frac{\widetilde{r}((n-1)^2,2y,n)}{(n-1)^{2n}} = e^y \ \ \ \ \ \text{and}
 \ \ \ \ \ \lim_{n\to \infty}x^{n}\ \widetilde{r}(x^{-1},2(n-1)^{-2},n) = e^{x}.
  $$
  }
\end{cor}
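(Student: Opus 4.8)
The plan is to reduce both limits to the single elementary fact that the truncated exponential $\ e_{n-1}(t)=\sum_{k=0}^{n-1}t^k/k!\ $ converges to $\ e^t, \ $ since these are precisely the Taylor polynomials of $\ \exp. \ $ The heavy lifting has already been done in Example \ref{1812b}, so the argument is a matter of substitution. For the first limit I would start from property e), namely $\ \widetilde{r}((n-1)^2,2y,n)=(n-1)^{2n}e_{n-1}(y), \ $ valid for $\ n\geq 2 \ $ (so that $\ (n-1)^2\neq 0 \ $ and Theorem \ref{r4} applies). Dividing both sides by $\ (n-1)^{2n}\ $ collapses the quotient to
$$\frac{\widetilde{r}((n-1)^2,2y,n)}{(n-1)^{2n}}=e_{n-1}(y),$$
and letting $\ n\to\infty\ $ gives $\ e^y. \ $

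For the second limit I would take property f), which reads $\ \widetilde{r}(x,2(n-1)^{-2},n)=x^{n}e_{n-1}(x^{-1}) \ $ for $\ x\neq 0 \ $ and $\ n\geq 2, \ $ and substitute $\ x\mapsto x^{-1}. \ $ This yields $\ \widetilde{r}(x^{-1},2(n-1)^{-2},n)=x^{-n}e_{n-1}(x), \ $ so multiplying through by $\ x^n\ $ cancels the prefactor and leaves
$$x^{n}\,\widetilde{r}(x^{-1},2(n-1)^{-2},n)=e_{n-1}(x),$$
whose limit as $\ n\to\infty\ $ is $\ e^x. \ $ In both cases I would remark that the finitely many small values of $\ n\ $ where the substitution is not literally defined are irrelevant, since only the tail of the sequence matters for the limit.

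There is essentially no obstacle here: the only point that requires any justification is the convergence $\ \lim_{n\to\infty}e_{n-1}(t)=e^t, \ $ which is the defining property of the exponential series and holds (locally uniformly) for every real $\ t. \ $ If one wished to emphasize the conceptual content, the remark worth making is that these identities exhibit $\ \widetilde{r}\ $ as an explicit polynomial deformation of $\ \exp: \ $ for suitably rescaled arguments the continuous Pochhammer analogue is exactly a partial exponential sum, and the exponential function emerges in the limit $\ n\to\infty. \ $
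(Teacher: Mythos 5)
Your proposal is correct and follows exactly the route the paper intends: the corollary is a direct consequence of identities e) and f) of Example \ref{1812b} together with the elementary convergence $e_{n-1}(t)\to e^t$, which is precisely the substitution argument you give. Nothing further is needed.
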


The Pochhammer symbol satisfies
$\ \displaystyle r(x,y,n+1)=(x+ny)r(x,y,n), $ \\
$$\ \frac{\partial r}{\partial x}(x,y,n)=
\sum_{i=0}^{n-1}r(x,y,i)r(x+(i+1)y,y,n-i-1), $$
$$\ \frac{\partial r}{\partial y}(x,y,n)=
\sum_{i=1}^{n-1}ir(x,y,i)r(x+(i+1)y,y,n-i-1).$$

\begin{prop}\label{1812a}{\em \
\begin{description}
  \item[a)] For $\ n\in \mathbb{N}_{\geq 2} \ $ the function $\ \widetilde{r}(x,y,n) \ $ satisfies the recursion
$$\widetilde{r}(x,y,n+1)\ = \ x\ \widetilde{r}\big(x,\frac{n^2}{(n-1)^2}y,n\big)\ + \ \frac{n^{2n}xy^n}{2^nn!}. \ \ \ \ \ \ \ \ \ \ \ \ \ \ \ $$
  \item[b)] For $\ n\in \mathbb{N}_{\geq 3} \ $ we have that
  $$\displaystyle x\frac{ \partial \widetilde{r}}{\partial x}(x,y,n) \ = \
  n\widetilde{r}(x,y,n) \ - \   \frac{(n-1)^2y}{2}\widetilde{r}\big(x,\frac{(n-1)^2}{(n-2)^2}y,n-1\big). $$
  \item[c)]  For $\ n\in \mathbb{N}_{\geq 3} \ $ we have that
  $$\displaystyle \frac{ \partial \widetilde{r}}{\partial y}(x,y,n) \ = \
 \frac{(n-1)^2}{2}\widetilde{r}\big(x,\frac{(n-1)^2}{(n-2)^2}y,n-1\big).
 \ \ \ \ \ \ \ \ \ \ \ \ \ \ \ \ \ \ \ \ $$
\end{description}

}
\end{prop}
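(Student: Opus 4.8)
The plan is to derive all three identities from the explicit polynomial representation of $\widetilde{r}$ recorded in (\ref{e1}), which after reindexing by $j=n-k$ reads
$$\widetilde{r}(x,y,n) \ = \ \sum_{j=0}^{n-1}\frac{(n-1)^{2j}}{2^{j}\,j!}\,x^{n-j}y^{j}$$
and is valid for every $(x,y)\in\mathbb{R}^2$ and $n>0$. The single maneuver underlying each part is a rescaling of the second variable: since the coefficient of $x^{n-j}y^{j}$ carries the base $(n-1)^{2j}$, replacing $y$ by $\frac{(n-1)^2}{(n-2)^2}y$ converts this base into $(n-2)^{2j}$, exactly the base appearing in $\widetilde{r}(\cdot,\cdot,n-1)$; conversely, scaling $y$ by $\frac{n^2}{(n-1)^2}$ turns $(n-1)^{2j}$ into $n^{2j}$.

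For part (c) I would differentiate the displayed polynomial termwise in $y$. The factor $j$ produced by differentiation annihilates the $j=0$ summand and absorbs one factorial, so that after the substitution $i=j-1$ the derivative becomes $\frac{(n-1)^2}{2}\sum_{i=0}^{n-2}\frac{(n-1)^{2i}}{2^{i}i!}x^{(n-1)-i}y^{i}$. By the rescaling observation above, the sum here is $\widetilde{r}\big(x,\frac{(n-1)^2}{(n-2)^2}y,n-1\big)$; the hypothesis $n\geq 3$ is precisely what guarantees $(n-2)^2\neq 0$, so this rescaling is legitimate.

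For part (b) the quickest route is Euler's relation for homogeneous functions: by Example \ref{1812b}(g) the polynomial $\widetilde{r}(x,y,n)$ is homogeneous of degree $n$ in $(x,y)$, whence $x\frac{\partial \widetilde{r}}{\partial x}+y\frac{\partial \widetilde{r}}{\partial y}=n\widetilde{r}$. Substituting the formula from part (c) for $y\frac{\partial \widetilde{r}}{\partial y}$ and solving for $x\frac{\partial \widetilde{r}}{\partial x}$ yields (b) at once; alternatively one may differentiate the polynomial in $x$ and perform the same index shift directly.

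Finally, for part (a) I would expand $\widetilde{r}(x,y,n+1)$ using the same polynomial, now with base $n^{2j}$ and $j$ ranging from $0$ to $n$. Splitting off the top term $j=n$ produces exactly $\frac{n^{2n}xy^{n}}{2^{n}n!}$, and factoring $x$ out of the remaining sum $\sum_{j=0}^{n-1}\frac{n^{2j}}{2^{j}j!}x^{n-j}y^{j}$ leaves a sum that, under the $\frac{n^2}{(n-1)^2}$ rescaling, equals $\widetilde{r}\big(x,\frac{n^2}{(n-1)^2}y,n\big)$; the condition $n\geq 2$ is what makes $(n-1)^2\neq 0$. Equivalently one may use the closed form of Theorem \ref{r4} together with $e_n(u)=e_{n-1}(u)+\frac{u^{n}}{n!}$. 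I do not anticipate a genuine obstacle: each identity is a polynomial identity in $(x,y)$, so no boundary case such as $x=0$ needs separate treatment, and the only point demanding care is the consistent bookkeeping of the rescaling factors and the accompanying shift of the summation index.
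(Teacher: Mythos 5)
Your proof is correct. It is in substance the same computation as the paper's, merely packaged differently: the paper invokes the closed form $\widetilde{r}(x,y,n)=x^n e_{n-1}\big(\tfrac{y(n-1)^2}{2x}\big)$ of Theorem \ref{r4} and reduces item a) to $e_n(u)=e_{n-1}(u)+\tfrac{u^n}{n!}$ and items b), c) to $\tfrac{d}{du}e_{n+1}(u)=e_n(u)$, whereas you manipulate the coefficient sum $\sum_{j=0}^{n-1}\tfrac{(n-1)^{2j}}{2^j j!}x^{n-j}y^j$ directly; those two truncated-exponential identities are precisely your ``split off the top term'' and ``differentiate and shift the index'' maneuvers, and the $y$-rescaling that converts the base $(n-1)^{2j}$ into $(n-2)^{2j}$ or $n^{2j}$ is implicit in both treatments. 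The one genuinely distinct ingredient is your use of Euler's homogeneity relation $x\,\partial_x\widetilde{r}+y\,\partial_y\widetilde{r}=n\widetilde{r}$ (justified by Example \ref{1812b}g) to deduce b) from c); this is tidier than differentiating in $x$, which in the paper's route means applying the product and chain rules to $x^n e_{n-1}(\,\cdot\,/x)$, and it makes transparent why the right-hand sides of b) and c) involve the same rescaled function. A further small dividend of your polynomial route is that the identities hold at $x=0$ without appealing to continuity, whereas the closed form requires $x\neq 0$.
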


\begin{proof}
Since $\ e_{n}(x)=e_{n-1}(x)+\frac{x^{n}}{n!}, \ $ item a)  follows from the identity
$$x^{n+1}e_n\big(\frac{yn^2}{2x}\big) \ = \ x^{n+1}e_{n-1}\big(\frac{yn^2}{2x}\big) \ + \
\frac{x^{n+1}}{n!}\bigg(\frac{yn^2}{2x}\bigg)^n .$$
Items b) and c) follow from the rules for derivatives and the identity
$\ \displaystyle \frac{de_{n+1}(x)}{dx}=e_n(x) .$
\end{proof}

For $\ n\in \mathbb{N}\ $ set $\  \displaystyle  2\mathrm{cosh}_{2n}(x)= \sum_{k=0}^{n}\frac{x^{2k}}{(2k)!}.$

\begin{thm}{\em Let $\ y \in \mathbb{R}_{>0}, \ n \in \mathbb{N}_{\geq 1} \ $ and  $\ X \ $ be a random variable with standard normal distribution. The expected valued of
$\ 2\mathrm{cosh}_{2(n-1)}((n-1)\sqrt{y}X)\ $ is equal to $\ \widetilde{r}(1,y,n): $
$$\widetilde{r}(1,y,n)\ = \ \frac{1}{\sqrt{2\pi}}\int_{-\infty}^{\infty}
2\mathrm{cosh}_{2(n-1)}((n-1)\sqrt{y}t) e^{-\frac{t^2}{2}}dt.$$
}
\end{thm}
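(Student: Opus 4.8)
The plan is to reduce both sides to the same explicit finite sum in $y$ by expanding the truncated hyperbolic cosine and integrating term by term against the Gaussian weight. First I would apply Theorem \ref{r4} at $x=1$ to rewrite the left-hand side as $\widetilde{r}(1,y,n) = e_{n-1}\left(\frac{y(n-1)^2}{2}\right) = \sum_{k=0}^{n-1}\frac{1}{k!}\left(\frac{(n-1)^2 y}{2}\right)^k$, so that the asserted identity becomes a concrete claim about a finite collection of Gaussian moments.

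Next I would substitute the definition $2\mathrm{cosh}_{2(n-1)}((n-1)\sqrt{y}\,t) = \sum_{k=0}^{n-1}\frac{(n-1)^{2k}y^k}{(2k)!}\,t^{2k}$ into the integral. Because this is a finite sum, summation and integration may be interchanged without any convergence concerns, so the right-hand side collapses to $\sum_{k=0}^{n-1}\frac{(n-1)^{2k}y^k}{(2k)!}\cdot\frac{1}{\sqrt{2\pi}}\int_{-\infty}^{\infty}t^{2k}e^{-t^2/2}\,dt$.

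The one computational ingredient is the even-moment formula for the standard normal distribution, $\frac{1}{\sqrt{2\pi}}\int_{-\infty}^{\infty}t^{2k}e^{-t^2/2}\,dt = (2k-1)!! = \frac{(2k)!}{2^k\,k!}$; the odd moments vanish by symmetry, which is precisely why only the even Taylor terms packaged in $2\mathrm{cosh}_{2(n-1)}$ survive. Inserting this value cancels the factor $(2k)!$ and leaves $\sum_{k=0}^{n-1}\frac{(n-1)^{2k}y^k}{2^k\,k!} = \sum_{k=0}^{n-1}\frac{1}{k!}\left(\frac{(n-1)^2 y}{2}\right)^k$, which is exactly the expression produced for the left-hand side in the first step; this completes the identification.

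I do not expect a genuine obstacle. The only step deserving care is the Gaussian moment formula, which one may either cite or establish quickly by induction using integration by parts (the recursion $E[X^{2k}]=(2k-1)\,E[X^{2k-2}]$), or interpret combinatorially as the count $(2k-1)!!$ of perfect matchings of a $2k$-element set. It is worth noting that the same double factorial $(2k-1)!!$ already appears in Lemma \ref{r1} in the evaluation of $a_{x,k}$, which is the structural reason the random-variable description of $\widetilde{r}(1,y,n)$ holds.
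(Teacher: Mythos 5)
Your proposal is correct and is essentially identical to the paper's own proof: expand the truncated hyperbolic cosine, swap the finite sum with the Gaussian integral, apply the even-moment formula $(2k-1)!!=\frac{(2k)!}{2^k k!}$, and recognize the resulting sum as $e_{n-1}\bigl(\tfrac{(n-1)^2 y}{2}\bigr)=\widetilde{r}(1,y,n)$ via Theorem \ref{r4}. The only difference is a trivial reindexing of the summation variable.
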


\begin{proof}We have that
$$ \frac{1}{\sqrt{2\pi}}\int_{-\infty}^{\infty}
2\mathrm{cosh}_{2(n-1)}((n-1)\sqrt{y}t) e^{-\frac{t^2}{2}}dt \ = \
\frac{1}{\sqrt{2\pi}}\int_{-\infty}^{\infty}\bigg(\sum_{l=0}^{n-1}
\frac{((n-1)\sqrt{y}t)^{2l}}{(2l)!}\bigg) e^{-\frac{t^2}{2}}dt \ = $$
$$ \frac{1}{\sqrt{2\pi}}\int_{-\infty}^{\infty}\bigg(\sum_{k=1}^{n}
\frac{((n-1)\sqrt{y}t)^{2n-2k}}{(2n-2k)!}\bigg) e^{-\frac{t^2}{2}}dt \ = \ $$
$$\sum_{k=1}^{n}
\frac{((n-1)\sqrt{y})^{2n-2k}}{(2n-2k)!}\frac{1}{\sqrt{2\pi}}\int_{-\infty}^{\infty} t^{2n-2k}e^{-\frac{t^2}{2}}dt  \ = \ $$
$$ \sum_{k=1}^{n}
\frac{((n-1)\sqrt{y})^{2n-2k}}{(2n-2k)!}(2n-2k-1)!!\ = \
\sum_{k=1}^{n}
\frac{((n-1)\sqrt{y})^{2n-2k}}{2^{n-k}(n-k)!} \ =   $$
$$\sum_{k=1}^{n}
\frac{((n-1)\sqrt{y})^{2n-2k}}{2^{n-k}(n-k)!} \ = \  \sum_{k=1}^{n} \frac{\big(\frac{(n-1)^2y}{2}\big)^{n-k}}{(n-k)!} \ = \ \widetilde{r}(1,y,n).$$

\end{proof}

Next we introduce an extension of  $\ \widetilde{r}(x,y,n):\mathbb{R}_{x>0} \times\mathbb{ R }_{\geq 0}\times \mathbb{N}_{\geq 1}  \rightarrow \mathbb{R}\ $ to a map
$\ \widetilde{r}(x,y,z): \mathbb{R}_{>0}\times\mathbb{ R }_{\geq 0}\times \mathbb{R}_{> 0} \ $
defined as follows:
\begin{equation}\label{60124a}
\ \widetilde{r}(x,y,z) \ = \  x^ze_{z-1}\bigg(\frac{y(z-1)^2}{2x}\bigg) \ = \
x^{z}e^{\frac{y(z-1)^2}{2x}}\frac{\Gamma\big(z,\frac{y(z-1)^2}{2x}\big)}{\Gamma(z)}
\end{equation}
where for $\ z > 0 \ $ we set $\ \displaystyle e_{z-1}(x)=e^x\frac{\Gamma(z,x) }{\Gamma(z) }, \ $ and
$$\Gamma(z)  = \int_{0}^{\infty}t^{z-1}e^{-t}dt, \ \ \ \
\Gamma(z,x)  =  \int_{x}^{\infty}t^{z-1}e^{-t}dt, \ \ \ \ \gamma(z,x)  =  \int_{0}^{x}t^{z-1}e^{-t}dt. $$
The fact that formula (\ref{60124a}) gives an extension follows from the identities
$$ e_{n-1}(x)\ = \ \sum_{k=0}^{n-1} \frac{x^{k}}{k!} \ = \ e^x\frac{\Gamma(n,x) }{\Gamma(n) }
\ \ \ \ \ \text{for} \ \ \ n \geq 1$$
which can be shown by iterated integration by parts.

\begin{thm}\label{2012a}{\em The function $\ \widetilde{r}(x,y,n):\mathbb{R}_{x>0}
\times\mathbb{ R }_{\geq 0}\times \mathbb{N}_{\geq 1} \rightarrow \mathbb{R} \ $ admits a smooth extension
$\ \widetilde{r}(x,y,z): \mathbb{R}_{>0} \times\mathbb{ R }_{\geq 0}\times \mathbb{R}_{>0} \rightarrow \mathbb{R} \ $
given by the following formulae
$$\widetilde{r}(x,y,z) \ = \ x^ze^{\frac{y(z-1)^2}{2x}}\bigg(1-\frac{1}{\Gamma\left(z\right)}
\sum_{k=0}^{\infty}\frac{(-1)^{k}y^{k}(z-1)^{2k}}{2^k k!x^k(z+k) }\bigg),  $$
$$\widetilde{r}(x,y,z)  \ = \ x^ze^{\frac{(z-1)^2y}{2x}}-\frac{y^z(z-1)^{2z}}{2^z}\sum_{k=0}^{\infty}
\frac{y^k(z-1)^{2k}}{2^kx^k\Gamma\left(z+k+1\right)} .  $$
}
\end{thm}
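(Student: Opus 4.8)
The plan is to start from the definition (\ref{60124a}), namely $\ \widetilde{r}(x,y,z)=x^{z}e^{w}\,\Gamma(z,w)/\Gamma(z)\ $ with $\ w=\frac{y(z-1)^{2}}{2x},\ $ and to read off the two advertised expansions from the two classical series for the lower incomplete gamma function. Since $\ \Gamma(z,w)=\Gamma(z)-\gamma(z,w),\ $ we have $\ \widetilde{r}(x,y,z)=x^{z}e^{w}\big(1-\gamma(z,w)/\Gamma(z)\big),\ $ so the whole problem reduces to expanding $\ \gamma(z,w)=\int_{0}^{w}t^{z-1}e^{-t}\,dt\ $ in two different ways and substituting $\ w=\frac{y(z-1)^{2}}{2x}.$

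For the first identity I would expand $\ e^{-t}=\sum_{k\geq 0}(-t)^{k}/k!\ $ under the integral sign and integrate term by term (legitimate by uniform convergence on $\ [0,w]$), which gives $\ \gamma(z,w)=\sum_{k\geq 0}\frac{(-1)^{k}w^{z+k}}{k!(z+k)};\ $ inserting this into $\ 1-\gamma(z,w)/\Gamma(z)\ $ and writing $\ w^{z+k}=\big(\frac{y(z-1)^{2}}{2x}\big)^{z+k}\ $ in terms of $\ x,y,z\ $ produces the first display. For the second identity I would instead use the confluent (Tricomi) expansion $\ \gamma(z,w)=\Gamma(z)\,w^{z}e^{-w}\sum_{k\geq 0}\frac{w^{k}}{\Gamma(z+k+1)},\ $ obtained either from Kummer's relation $\ \gamma(z,w)=\frac{w^{z}}{z}e^{-w}M(1,z+1,w)\ $ with $\ M(1,z+1,w)=\sum_{k\geq 0}w^{k}/(z+1)_{k},\ $ or, more elementarily, by iterating the integration-by-parts recursion $\ \gamma(z+1,w)=z\gamma(z,w)-w^{z}e^{-w}.\ $ Substituting into $\ x^{z}e^{w}\gamma(z,w)/\Gamma(z),\ $ the exponentials $\ e^{\pm w}\ $ cancel and $\ x^{z}w^{z}=(xw)^{z}=\big(\frac{y(z-1)^{2}}{2}\big)^{z},\ $ which yields the second display. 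Both are essentially bookkeeping once the relevant series for $\ \gamma\ $ is in hand.

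The substantive part is the claimed smoothness on $\ \mathbb{R}_{>0}\times\mathbb{R}_{\geq 0}\times\mathbb{R}_{>0}.\ $ Away from the slice $\ z=1\ $ this is routine: $\ x^{z}=e^{z\log x}\ $ and $\ e^{w}\ $ are smooth for $\ x>0,\ $ $\ 1/\Gamma\ $ is entire, $\ w\ $ is polynomial in $\ z\ $ over $\ x,\ $ and each series is entire in $\ w\ $ with coefficients whose $\ z$-derivatives decay like $\ 1/k!\ $ (respectively $\ 1/\Gamma(z+k+1)$), so the series may be differentiated term by term and are $\ C^{\infty}\ $ in $\ (w,z).\ $ The main obstacle is the point $\ z=1,\ $ where $\ w\ $ vanishes to second order and the factor $\ w^{z}=\big(\frac{y(z-1)^{2}}{2x}\big)^{z}\ $ — equivalently $\ (z-1)^{2z}\ $ — enters. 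The natural tool is the factorization $\ \gamma(z,w)=w^{z}H(z,w),\ $ where $\ H(z,w)=\int_{0}^{1}s^{z-1}e^{-ws}\,ds\ $ is smooth in $\ (z,w)\ $ with $\ H(z,0)=1/z;\ $ along $\ w=\frac{y(z-1)^{2}}{2x}\ $ this exhibits $\ \widetilde{r}\ $ near $\ z=1\ $ as $\ x^{z}e^{w}\ $ minus a smooth, nonvanishing multiple of $\ (z-1)^{2z}.\ $ This is the step I expect to require the most care, because for $\ y>0\ $ the degree of differentiability attainable at $\ z=1\ $ is governed by the non-analytic dependence of $\ w^{z}\ $ on $\ z\ $ as $\ w\to 0,\ $ and it is precisely here that the exact regularity across $\ z=1\ $ must be pinned down.
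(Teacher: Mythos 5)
Your derivation of the two series identities follows essentially the paper's route: the paper simply quotes from the NIST handbook the two classical expansions of $\Gamma(z,w)/\Gamma(z)$ (the alternating series for $\gamma(z,w)$ obtained by expanding $e^{-t}$ and integrating term by term, and the expansion $\gamma(z,w)=\Gamma(z)\,w^{z}e^{-w}\sum_{k}w^{k}/\Gamma(z+k+1)$), substitutes $w=\frac{y(z-1)^{2}}{2x}$ and multiplies by $x^{z}$, exactly as you propose; deriving the expansions instead of citing them costs nothing. One bookkeeping caveat: carried out correctly, your first computation gives
$$\widetilde{r}(x,y,z)=x^{z}e^{w}\bigg(1-\frac{w^{z}}{\Gamma(z)}\sum_{k=0}^{\infty}\frac{(-1)^{k}w^{k}}{k!\,(z+k)}\bigg),\qquad w=\frac{y(z-1)^{2}}{2x},$$
so the first display of the theorem as printed is missing the factor $w^{z}=\frac{y^{z}(z-1)^{2z}}{2^{z}x^{z}}$ in front of the sum (compare the second display, where the analogous factor $x^{z}w^{z}=\frac{y^{z}(z-1)^{2z}}{2^{z}}$ does appear). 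Your argument would silently correct this; you should not claim it reproduces the display verbatim.

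The genuine gap is the one you yourself flag and then leave open: the regularity at $z=1$. Here you are already more careful than the paper, whose entire treatment of smoothness is a one-sentence assertion; but your factorization $\gamma(z,w)=w^{z}H(z,w)$ with $H(z,w)=\int_{0}^{1}s^{z-1}e^{-ws}\,ds$ smooth and $H(1,0)=1$ actually settles the question, and settles it negatively. It writes $\widetilde{r}=x^{z}e^{w}-\big(\tfrac{y(z-1)^{2}}{2}\big)^{z}\,e^{w}H(z,w)/\Gamma(z)$, i.e.\ a real-analytic function minus a smooth, nonvanishing multiple of $\big(\tfrac{y(z-1)^{2}}{2}\big)^{z}$. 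For fixed $y>0$ the factor $|z-1|^{2z}=(z-1)^{2}e^{2(z-1)\ln|z-1|}=(z-1)^{2}+2(z-1)^{3}\ln|z-1|+\cdots$ is $C^{2}$ but not $C^{3}$ at $z=1$, and for non-integer $z$ the factor $y^{z}$ fails to be $C^{\lceil z\rceil}$ up to $y=0$; nothing cancels these singularities since the cofactor equals $1/\Gamma(z+1)\neq 0$ at $w=0$. So the extension is real-analytic on $\{x>0,\ y>0,\ z>0,\ z\neq 1\}$ but is not $C^{\infty}$ on the closed domain stated in the theorem, and no amount of extra work will establish the smoothness claim as written. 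Completing your proof therefore means replacing "smooth extension on $\mathbb{R}_{>0}\times\mathbb{R}_{\geq 0}\times\mathbb{R}_{>0}$" by smoothness on the open region $w>0$ together with the precise ($C^{2}$, not $C^{3}$) regularity across $z=1$; leaving that "to be pinned down" is the one missing step, and it changes the statement rather than merely finishing the argument.
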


\begin{proof}  The function $\ \widetilde{r}(x,y,z)\ $ given by (\ref{60124a}) is smooth on
$\ \mathbb{R}_{x>0} \times\mathbb{ R }_{\geq 0}\times \mathbb{R}_{>0}. \ $ The desired identities follows from the following formulae \cite{ni}:
\[\frac{\Gamma\left(z,x\right)}{\Gamma\left(z\right)}\ = \ \frac{ \Gamma(z)- \int_{0}^{x}t^{z-1}e^{-t}dt}{\Gamma(z)} \ = \  1-\frac{x^{z}}{\Gamma\left(z\right)}\sum_{k=0}^{\infty}\frac{(-1)^{k}
}{(z+k)}\frac{x^{k}}{k!} \ = \ 1-x^{z}e^{-x}\sum_{k=0}^{\infty}%
\frac{x^{k}}{\Gamma\left(z+k+1\right)},\]
\noindent $\text{and} \ \ \ \displaystyle e^x\frac{\Gamma\left(z,x\right)}{\Gamma\left(z\right)} \ = \  e^x\bigg(1-\frac{x^{z}}{\Gamma\left(z\right)}\sum_{k=0}^{\infty}\frac{(-1)^{k}
}{(z+k)}\frac{x^{k}}{k!}\bigg) \ = \ e^x-x^{z}\sum_{k=0}^{\infty}
\frac{x^{k}}{\Gamma\left(z+k+1\right)},$ \\
by first replacing $\ x \ $ by $\ \frac{y(z-1)^2}{2x}, \ $  and then multiplying by
$\ x^z.\ $

\end{proof}

\begin{examp}\label{1812c}{\em We list some properties of $\ \widetilde{r}(x,y,z).\ $ \\
a) $\widetilde{r}(1,1,z)=e^{\frac{(z-1)^2}{2}}\big(1-\frac{1}{\Gamma\left(z\right)}
\sum_{k=0}^{\infty}\frac{(-1)^{k}(z-1)^{2k}}{2^kk!(z+k)}\big). $ \\
b) $\widetilde{r}(1,1,z)=e^{\frac{(z-1)^2}{2}}-(z-1)^{2z}\sum_{k=0}^{\infty}
\frac{(z-1)^{2k}}{2^{z+k}\Gamma\left(z+k+1\right)}.$ \\
c) $ \widetilde{r}(x,2x(z-1)^{-2},z)=ex^{z}-x^{z}\sum_{k=0}^{\infty}
\frac{1}{\Gamma\left(z+k+1\right)}.$ \\
d) $\widetilde{r}(x,y,z)=y^z\widetilde{r}(\frac{x}{y},1,z).$\\
e) $\widetilde{r}(x,y,z) \leq  x^{z}e^{\frac{y(z-1)^2}{2x}}\ $ since
$\ \Gamma(z,x)\leq \Gamma(z).\ $
}
\end{examp}

We use the symbol $\ \sim \ $ to denote asymptotic equivalence. Stirling's formula yields $\  \displaystyle\Gamma(x)\sim  \sqrt{2\pi}e^{-x}x^{x-\frac{1}{2}} \ \ $  as $\ \ x \to \infty.  \ $ Therefore
$$\ \ \  r(x,y,z) \ \sim \ \frac{\sqrt{2\pi}}{\Gamma(\frac{x}{y})}y^ze^{-z}z^{z+\frac{x}{y}-\frac{1}{2}} \ \ \ \  \text{as} \ \ \  \ z \to \infty, \ \ \ \text{since by (1)}$$
$$r(x,y,z)\ = \ y^zr(\frac{x}{y},1,z)\ = \ y^z\frac{\Gamma(\frac{x}{y} + z)}{\Gamma(\frac{x}{y})}\ \sim $$
$$ \frac{\sqrt{2\pi}}{\Gamma(\frac{x}{y})}y^ze^{-z}e^{-\frac{x}{y}}
\big(\frac{x}{y}+z\big)^{z+\frac{x}{y}-\frac{1}{2}}\ \sim \ \frac{\sqrt{2\pi}}{\Gamma(\frac{x}{y})}y^ze^{-z}z^{z+\frac{x}{y}-\frac{1}{2}}.$$

A complete analysis of the asymptotic behaviour of $\ \widetilde{r}(x,y,z) \ $ is left open, however we study some interesting cases.

\begin{thm}\label{30124b}{\em \
\begin{description}
\item[a)]  $\displaystyle \widetilde{r}((z-1)^2,2y,z)\ \sim \ (z-1)^{2z}(e^y-\frac{y^z}{\Gamma(z+1)})\ \ $ as $\ \ z \to \infty.$
\item[b)] $\displaystyle \widetilde{r}((z-1)^2,2y,z)\ \sim \ \frac{y^{z-1}(z-1)^{2z}}{\Gamma(z)}\ \ $ as $\ \ y \to \infty.$
\item[c)] $\displaystyle x^z\widetilde{r}(\frac{1}{x},\frac{2}{(z-1)^2},z)\ \sim \  e^x-\frac{x^z}{\Gamma(z+1)}\ \ $ as $\ \ z \to \infty.$
\item[d)] $\displaystyle  x\widetilde{r}(\frac{1}{x},\frac{2}{(z-1)^2},z) \ \sim \ \frac{1}{\Gamma(z)}\ \ $ as $\ \ x \to \infty.$
\item[e)] $\displaystyle \widetilde{r}(1,\frac{2z}{(z-1)^2},z)\ \sim \ \frac{1}{2}e^z \ \ $ as $\ \ z \to \infty.$
\end{description}

}
\end{thm}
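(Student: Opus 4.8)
The plan is to reduce all five statements to the single-variable function $e_{z-1}$ via the defining formula (\ref{60124a}), and then apply one of two asymptotic tools depending on which variable tends to infinity. First I would record the three reductions obtained by choosing the arguments so that the quantity $\frac{y(z-1)^2}{2x}$ sitting inside $e_{z-1}$ collapses to a clean value. Setting the first slot to $(z-1)^2$ and the second to $2y$ gives $\frac{2y(z-1)^2}{2(z-1)^2}=y$, hence $\widetilde{r}((z-1)^2,2y,z)=(z-1)^{2z}e_{z-1}(y)$, which governs (a) and (b). Setting the first slot to $\frac{1}{x}$ and the second to $\frac{2}{(z-1)^2}$ yields argument $x$, hence $\widetilde{r}(\frac1x,\frac{2}{(z-1)^2},z)=x^{-z}e_{z-1}(x)$, governing (c) and (d). Finally $x=1$ with second slot $\frac{2z}{(z-1)^2}$ yields argument $z$, hence $\widetilde{r}(1,\frac{2z}{(z-1)^2},z)=e_{z-1}(z)$, governing (e). Throughout I would use the two representations $e_{z-1}(w)=e^{w}\Gamma(z,w)/\Gamma(z)$ and $e_{z-1}(w)=e^{w}-w^{z}\sum_{k\geq0}w^{k}/\Gamma(z+k+1)$ established in the proof of Theorem \ref{2012a}.

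For the cases where $z\to\infty$ with the other variables fixed (parts (a) and (c)), I would use the series representation. For (c), $x^{z}\widetilde{r}(\frac1x,\frac{2}{(z-1)^2},z)=e_{z-1}(x)=e^{x}-x^{z}\sum_{k\geq0}x^{k}/\Gamma(z+k+1)$; since each $\Gamma(z+k+1)\to\infty$, the tail sum is dominated by its $k=0$ term, so $x^{z}\sum_{k\geq0}x^{k}/\Gamma(z+k+1)$ and $x^{z}/\Gamma(z+1)$ both tend to $0$, whence $e_{z-1}(x)$ and $e^{x}-x^{z}/\Gamma(z+1)$ share the common nonzero limit $e^{x}$ and their ratio tends to $1$. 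Part (a) is identical after factoring out $(z-1)^{2z}$: the bracket $e^{y}-y^{z}\sum_{k\geq0}y^{k}/\Gamma(z+k+1)$ and the claimed $e^{y}-y^{z}/\Gamma(z+1)$ both tend to $e^{y}$. For the cases where the spatial variable tends to infinity with $z$ fixed (parts (b) and (d)), I would instead use the incomplete-gamma representation together with the elementary tail estimate $\Gamma(z,w)\sim w^{z-1}e^{-w}$ as $w\to\infty$, which follows from one integration by parts in $\Gamma(z,w)=\int_{w}^{\infty}t^{z-1}e^{-t}dt=w^{z-1}e^{-w}+(z-1)\Gamma(z-1,w)$. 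For (b), $\widetilde{r}((z-1)^2,2y,z)=(z-1)^{2z}e^{y}\Gamma(z,y)/\Gamma(z)\sim(z-1)^{2z}e^{y}y^{z-1}e^{-y}/\Gamma(z)=y^{z-1}(z-1)^{2z}/\Gamma(z)$; for (d), $x\widetilde{r}(\frac1x,\frac{2}{(z-1)^2},z)=x^{1-z}e^{x}\Gamma(z,x)/\Gamma(z)\sim x^{1-z}e^{x}x^{z-1}e^{-x}/\Gamma(z)=1/\Gamma(z)$.

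The remaining statement (e) is the genuinely substantive one, and I expect it to be the main obstacle. Here the incomplete gamma function is evaluated at its own parameter, $e_{z-1}(z)=e^{z}\Gamma(z,z)/\Gamma(z)$, and the naive tail estimate of the previous paragraph fails because the peak of the integrand $t^{z-1}e^{-t}$ sits exactly at the lower limit $t=z$. The claim $\widetilde{r}(1,\frac{2z}{(z-1)^2},z)\sim\frac12 e^{z}$ is thus equivalent to the classical limit $\Gamma(z,z)/\Gamma(z)\to\frac12$ as $z\to\infty$. I would establish this by a central-limit argument: writing $\Gamma(z,z)/\Gamma(z)=\mathbb{P}(G_{z}\geq z)$ for $G_{z}$ a Gamma$(z,1)$ variate (mean and variance both $z$), the standardized variable $(G_{z}-z)/\sqrt{z}$ converges in distribution to a standard normal, whence $\mathbb{P}(G_{z}\geq z)\to\mathbb{P}(N\geq 0)=\frac12$; equivalently, a Laplace expansion of $\int_{z}^{\infty}t^{z-1}e^{-t}dt$ about the endpoint $t=z$ produces the same constant directly. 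Multiplying by $e^{z}$ then yields (e). In each case I would make the ratio-to-one form of $\sim$ explicit, noting in (a) and (c) that the two compared quantities share a common nonzero limit, and in (b), (d), (e) that the dominant-term substitution is justified by the stated gamma asymptotics.
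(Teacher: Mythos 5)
Your proposal is correct and follows essentially the same route as the paper: both reduce each case to $e_{z-1}$ evaluated at a clean argument via formula (\ref{60124a}) and then invoke the three standard asymptotics of $\Gamma(z,w)/\Gamma(z)$ (as $z\to\infty$ with $w$ fixed, as $w\to\infty$ with $z$ fixed, and on the diagonal $w=z$). The only difference is that the paper cites these facts from the literature, whereas you supply short self-contained justifications (series manipulation for (a) and (c), integration by parts for (b) and (d), and a central-limit argument for $\Gamma(z,z)/\Gamma(z)\to\tfrac{1}{2}$ in (e)), which adds detail but does not change the method.
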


\begin{proof}
We use three known facts \cite{n, ni, ol} on the asymptotic behaviour of the regularized incomplete gamma function:
 $$\ \frac{\Gamma(z,y)}{\Gamma(z)}\ \sim \  1-\frac{y^ze^{-y}}{\Gamma(z+1)}  \ \ \ \mathrm{as} \ \ \ z \to \infty ; \ \ \ \ \ \ \ \ \ \ \ \ \ \ \ \ \ \ \ \ \ \ \ \ \ \ \ \ \ \ \ \ \ \ \ \ \ \ \ \ \ \ \ \ \ \ \ \ \    $$
 $$\ \frac{\Gamma(z,y)}{\Gamma(z)}\ \sim \ \frac{y^{z-1}e^{-y}}{\Gamma(z)} \ \ \ \mathrm{as} \ \ \ y \to \infty ; \ \ \ \ \ \ \ \ \ \ \ \ \ \ \ \ \ \ \ \ \ \ \ \ \ \ \ \
  \ \ \ \ \ \ \ \ \ \ \ \ \ \ \ \ \ \ \ \ \ \ \ \ \ \ \ \ $$
 $$\ \frac{\Gamma(z,z)}{\Gamma(z)}\ \sim \ \frac{\sqrt{\frac{\pi}{2}}e^{-z}z^{z-\frac{1}{2}} }{\sqrt{2\pi}e^{-z}z^{z-\frac{1}{2}}}
 \ \sim \ \frac{1}{2} \ \ \
 \mathrm{as} \ \ \ z \to \infty.  \  \ \ \ \ \ \ \ \ \ \ \ \ \ \ \ \ \ \ \ \ \ \ \ \ \ \ \ \ \ \ \ \ \ \ \ \ \ \ \ \ \ $$

\noindent We obtain the desire results as follows.\\

\noindent $a) \displaystyle \ \ \ \widetilde{r}((z-1)^2,2y,z)\ = \ (z-1)^{2z}e^{y}\frac{\Gamma(z,y)}{\Gamma(z)} \ \sim \
(z-1)^{2z}\big( e^y-\frac{y^z}{\Gamma(z+1)} \big).$\\

\noindent $b)\displaystyle \ \ \ \widetilde{r}((z-1)^2,2y,z)\ = \ (z-1)^{2z}e^{y}\frac{\Gamma(z,y)}{\Gamma(z)} \ \sim \
\frac{y^{z-1}(z-1)^{2z}}{\Gamma(z)}.  \ \ \   \ \ \   \ \ \  \ \ \  \ \ \   \ \ \  $
\noindent $c)\displaystyle \ \ \ \widetilde{r}(\frac{1}{x},\frac{2}{(z-1)^2},z)\ = \ x^{-z}e^{x}\frac{\Gamma(z,x)}{\Gamma(z)} \ \sim \
x^{-z}\big( e^x-\frac{x^z}{\Gamma(z+1)} \big). $ \\

\noindent $d)\displaystyle \ \ \ \widetilde{r}(\frac{1}{x},\frac{2}{(z-1)^2},z)\ = \ x^{-z}e^{x}\frac{\Gamma(z,x)}{\Gamma(z)} \ \sim \
\frac{x^{-z}x^{z-1}}{\Gamma(z)}\ = \ \frac{1}{x\Gamma(z)}. \ \  \ \ \ \   \ \ \  \ \ \  \ \ \   \  \ \    $ \\

\noindent $e)\displaystyle \ \ \ \widetilde{r}(1,\frac{2z}{(z-1)^2},z)\ = \ e^{z}\frac{\Gamma(z,z)}{\Gamma(z)} \ \sim \ \frac{1}{2}e^z.     \ \ \   \ \ \  \ \ \  \ \ \   \ \   \ \ \   \ \  $
\end{proof}

\section{Continuous Analogue - Second Approach}

Note that formula (\ref{e1}) may be written for $\ n\geq 1 \ $ as follows
$$\widetilde{r}(x,y,n)\ = \ \sum_{k\in \mathbb{Z}\cap [1,n]} \dfrac{(n-1)^{2(n-k)}}{2^{n-k}\Gamma(n-k+1)}\ x^{k}y^{n-k}, $$
so $\ \widetilde{r}(x,y,n)\ $ is a weighted sum over the lattice points in the convex polytope $[1,n] \subseteq \mathbb{R}, $ with weights given by evaluation of the continuous function $$\frac{(z-1)^{2(z-s)}}{2^{z-s}\Gamma(z-s+1)}x^sy^{z-s} \ \ \ \ \text{for} \ \ \ \  1\leq s\leq z. $$
Thus according to the methodology outlined in the introduction, a continuous analogue for $\ r(x,y,n)\ $ may be defined as follows:
$$\rho(x,y,z)\ = \ \int_{1}^z
\dfrac{(z-1)^{2(z-s)}}{2^{z-s}\Gamma(z-s+1)}\ x^{s}y^{z-s}\ ds.$$
Note that $\ \rho(x,y,1)=0\ $ while $\ r(x,y,1)=x; \ $ this discrepancy is expected as we are trading a sum over a point by an integral over an interval of length $ 0. \ $ Setting $\ t=z-s \ $ in the above formula  we get
\begin{equation}\label{2012b}
\rho(x,y,z)\ = \
x^z\int_{0}^{z-1}\frac{\big(\frac{y(z-1)^2}{2x}\big)^t}{\Gamma(t+1)}\ dt
\ = \ x^z\mathrm{E}(\frac{y(z-1)^2}{2x}, z-1) \ ,
\end{equation}
$$ \text{where}\ \ \ \ \mathrm{E}(x,z) \ = \ \int_{0}^{z}\frac{x^t}{\Gamma(t+1)}\ dt \ \ \ \
\text{for} \ \ \ \ x\geq 0, \ z\geq 0. $$

\begin{rem}{\em Since $\ \displaystyle e_{n}(x) =  \sum_{k=0}^{n} \frac{x^{k}}{k!}=\sum_{k\in \mathbb{Z}\cap [0,n]}\frac{x^k}{\Gamma(k+1)}, \ $ we see that
$\ \mathrm{E}(x,z) \ $ arises as a continuous analogue of $\ \displaystyle e_{n}(x).  \ $ Letting $\ n\to \infty \ $ in this analogy  the exponential function $\ \displaystyle e^x \ $ is replaced by $ \ \displaystyle \nu(x) =\int_{0}^{\infty}\frac{x^t}{\Gamma(t+1)}\ dt.$}
\end{rem}

\begin{rem}{\em   The geometric sum polynomial
 $ \ \displaystyle S_m(x)=\sum_{k=0}^{m}x^k=\frac{x^{m+1}-1}{x-1}, \ $ for $\ x>1,\ $ admits a natural extension $\ \displaystyle S_y(x)=\frac{x^{y+1}-1}{x-1}\ $ for $\ y\in \mathbb{R}. \ $ Furthermore the function $\ \displaystyle G(x,y)= \int_{0}^{y}x^tdt=\frac{x^y-1}{\mathrm{ln}(x)}\ $
 arises as a continuous analogue of   $\ S_m(x).\ $
For $\ y>0 \ $ it turns out that $\ G(x,y)\  $ is negligible in comparison to  $\ S_y(x) \ $ as $\ x\to \infty, \ $ indeed
$$\lim_{x\to \infty}\frac{G(x,y)}{S_y(x)} \ = \ \lim_{x\to \infty}\frac{\int_{0}^{y}x^tdt }{\frac{x^{y+1}-1}{x-1}}  \ = \
\lim_{x\to \infty}\frac{(x^y-1)(x-1)}{(x^{y+1}-1)\mathrm{ln}(x)} \ = \ 0.$$
}
\end{rem}

To understand  $\ \rho(x,y,z) \ $ we study the function
$$\mathrm{E}(x,z)\ = \  \int_{0}^{\infty}\frac{x^t}{\Gamma(t+1)}\ dt \ - \
\int_{z}^{\infty}\frac{x^t}{\Gamma(t+1)}\ dt \ = \ \nu(x) - \nu(z,x).$$ The functions
$ \ \ \displaystyle \nu(x)=\int_{0}^{\infty}\frac{x^t}{\Gamma(t+1)} dt\ \ $ and
$ \ \ \displaystyle \nu(x,z)= \int_{z}^{\infty}\frac{x^t}{\Gamma(t+1)} dt\ \ $ have a quite long history, see \cite{ba,g} and the references therein; \ they are often studied as instances of the so called $ \ \mu $-function of three variables given by
$$  \mu(x,\beta, \alpha)\ = \ \int_0^{\infty}\frac{x^{\alpha+t}\ t^{\beta}}{\Gamma(\alpha +t+1)\Gamma(\beta+1) }\ dt  .$$ Indeed  $ \ \nu(x)=\mu(x,0,0) \ $ and $\  \nu(x,z)=\mu(x,0,z). \  $ Note that $$ \displaystyle \ \nu(e^{-s})\ = \ \int_{0}^{\infty}\frac{e^{-st}}{\Gamma(t+1)} dt\ $$
is the Laplace transform of $\ \displaystyle \frac{1}{\Gamma(t+1)};\ $  similarly  $\ \mathrm{E}(e^{-s},z)\ $ is the cut-off of the Laplace transform of $\ \displaystyle \frac{1}{\Gamma(t+1)}\ $ at level $\ z.$

The reciprocal gamma  function is entire. Wrench in \cite{w} provides a representation of it as an absolutely convergent power series:
\begin{equation}\label{80124a}
\ \frac{1}{\Gamma(z)} \ = \
z\ \mathrm{exp}\big({\gamma z - \sum_{k=2}^{\infty}(-1)^k\frac{\zeta(k)}{k}z^k}\big)\ = \
 z\ \mathrm{exp}\big({\sum_{k=1}^{\infty}(-1)^{k+1}\frac{\widehat{\zeta}(k)}{k}z^k}\big)
\end{equation}
$$\text{where }  \ \ \   \gamma \ = \ \lim _{n\rightarrow \infty }\big(\sum _{k=1}^{n}{\frac {1}{k}}-\ln(n)\big) \ = \ \int _{1}^{\infty }\left({1 \over \lfloor x\rfloor }-{1 \over x}\right) dx \ \approx \
0.5772 \ $$ is the Euler-Mascheroni constant;  $\ \ \displaystyle \zeta(k)= \sum_{n=1}^{\infty}\frac{1}{n^k} \ $ are the integer values of the Riemann zeta function for $\ k \geq 2;\ \ $ and finally $\ \widehat{\zeta}(1)=\gamma \ $ and $\ \widehat{\zeta}(k) = \zeta(k)\ $ for $\ k\in \mathbb{N}_{\geq 2}.$

\begin{rem}{\em According to the terminology used in this work the  logarithmic function $\ \displaystyle \mathrm{ln}(x)=\int_{1}^{x}\frac{1}{t}dt\ $ arises as a continuous analogue of the harmonic sum $\ \displaystyle H_n=\sum_{k=1}^{n}\frac{1}{k}.\ $ The difference between $\ H_n \ $ and $\ \mathrm{ln}(n)\ $  approaches  $\ \gamma\ $ as $\ n \to \infty. \ $
}
\end{rem}

\begin{prop}\label{2812a}{\em The entire function $ \  \displaystyle \frac{1}{\Gamma(t+1)} \ $ may be written as a absolutely convergent power series as follows:
\begin{equation}\label{80124b}
  \displaystyle \frac{1}{\Gamma(t+1)}  \ = \
  \mathrm{exp}\big({\sum_{k=1}^{\infty}(-1)^{k+1}\frac{\widehat{\zeta}(k)}{k}t^k}\big)\ = \ \sum_{n=0}^{\infty}c_nt^n.
\end{equation}
The following properties hold.
\begin{description}
  \item[a)] $ c_0=1\ $ and for $\ n\in \mathbb{N}\ $ we have that
  $\ \ \displaystyle  (n+1)c_{n+1}  =  \sum_{k=0}^{n}(-1)^{k}\widehat{\zeta}(k+1)c_{n-k}.$
  \item[b)]  For $\ n\in \mathbb{N}_{\geq 1} \ $ we have that
  $$c_n  \ = \  \sum_{\substack{ k_1+ \cdots +k_{l}=n \\
k_i\geq 1, \ l\geq 1 }} \frac{(-1)^{n+l}}{l!}\frac{\widehat{\zeta}(k_1)}{k_1} \cdots \frac{\widehat{\zeta}(k_l)}{k_l} .$$
  \item[c)] $\displaystyle \frac{x^t}{\Gamma(t+1)}\ = \ \mathrm{exp}\big({(\mathrm{ln}(x)+\gamma) t + \sum_{k=2}^{\infty}(-1)^{k+1}\frac{\zeta(k)}{k}t^k}\big).$
  \item[d)] $ \displaystyle \frac{x^t}{\Gamma(t+1)}= e^{\mathrm{ln}(x)t}\sum_{n=0}^{\infty}c_nt^{n}=
      \sum_{n=0}^{\infty}c_n(x)t^{n} \ \   $ where
  $  \ \ \displaystyle c_n(x) =
  \sum_{k=0}^{n}c_{n-k}\frac{\mathrm{ln}^{k}(x)}{k!}. $
   \item[e)] $  \displaystyle \frac{d^k}{dt^k}\frac{x^t}{\Gamma(t+1)} = \sum_{n=0}^{\infty}(n+1)_{\overline{k}}c_{n+k}(x)t^n \ \ \ $ and
    $ \ \ \  \displaystyle \int^{[k]}\frac{x^t}{\Gamma(t+1)}dt^{[k]} = \sum_{n=k}^{\infty}\frac{c_{n-k}(x)}{(n)_{\underline{k}}}t^n. \ \ $
  \item[f)]  $ \displaystyle E(x,z)=\int_{0}^{z} \frac{x^t}{\Gamma(t+1)}dt = \sum_{n=1}^{\infty}c_{n-1}(x)\frac{z^{n}}{n} \ \ $ is smooth
     on $\ \mathbb{R}_{>0}\times \mathbb{R}_{>0}.$
 \item[g)] $ \displaystyle \frac{\partial^k E}{\partial z^k}(x,z) = \sum_{n=0}^{\infty}(n+1)_{\overline{k-1}}c_{n+k-1}(x)z^{n} \ \ $ for $\ \ k\in \mathbb{N}_{\geq 1}.$
 \item[h)] $ \displaystyle \frac{d c_n(x)}{d x} = \frac{c_{n-1}(x)}{x}\ $ for
 $\ n\in \mathbb{N}_{\geq 1},\ $ and $\ \displaystyle  \frac{d c_0(x)}{d x}=0. \ $ Furthermore $\ c_n(1)=c_n.$
 \item[i)] $ \displaystyle \big(x\frac{\partial}{\partial x}\big)^kE(x,z)= \sum_{n=k+1}^{\infty} c_{n-k-1}(x)\frac{z^{n}}{n} \ \ $ for $\ \ k\in \mathbb{N}_{\geq 1}.$
 \item[j)]   $ \ \displaystyle  E(e^{\gamma},z)  \leq  z\ \ $  and  $ \ \ \displaystyle  E(x,z)  \leq  \frac{ x^ze^{-\gamma z}-1}{\mathrm{ln}(x)-\gamma}\ \ $ for $ \displaystyle \ x \neq e^{\gamma}. \ $

\end{description}
 }
\end{prop}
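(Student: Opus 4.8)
The plan is to base everything on the master identity (\ref{80124b}), which I first obtain from Wrench's expansion (\ref{80124a}): writing $\Gamma(t+1)=t\Gamma(t)$ and applying (\ref{80124a}) with $z=t$ cancels the leading factor $z=t$, leaving $1/\Gamma(t+1)=\exp\big(\sum_{k\ge1}(-1)^{k+1}\widehat\zeta(k)t^k/k\big)=\sum_{n\ge0}c_nt^n$, a series of infinite radius of convergence since $1/\Gamma$ is entire. Items a)--d) are then formal. For a), set $f=1/\Gamma(t+1)=e^{g}$ with $g(t)=\sum_{k\ge1}(-1)^{k+1}\widehat\zeta(k)t^k/k$, so that $g'(t)=\sum_{k\ge0}(-1)^k\widehat\zeta(k+1)t^k$; the relation $f'=g'f$ together with comparison of the coefficient of $t^n$ gives the stated recursion. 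For b), expand $e^{g}=\sum_{l\ge0}g^l/l!$, extract the coefficient of $t^n$ in $g^l$ as the composition sum over $k_1+\cdots+k_l=n$ with $k_i\ge1$, and collect the signs via $\prod_i(-1)^{k_i+1}=(-1)^{n+l}$. For c), multiply by $x^t=e^{\ln(x)t}$ and peel off the $k=1$ term of $g$, namely $\widehat\zeta(1)t=\gamma t$, absorbing it into $\ln(x)t$. For d), take the Cauchy product of $e^{\ln(x)t}=\sum_k\ln^k(x)t^k/k!$ with $\sum_m c_mt^m$ to read off $c_n(x)$.

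Items e)--i) are analytic bookkeeping on the entire series $\sum_n c_n(x)t^n$ of d), every operation being justified by its local uniform convergence together with all derivatives. For e), differentiate and integrate term by term and reindex $n\mapsto n+k$ (resp. $n\mapsto n-k$) to produce the rising factorial $(n+1)_{\overline k}$ and the falling factorial $(n)_{\underline k}$. Item f) is the $t$-integral of d) from $0$ to $z$, giving $\sum_{n\ge1}c_{n-1}(x)z^n/n$; joint smoothness on $\mathbb{R}_{>0}\times\mathbb{R}_{>0}$ follows since each $c_n(x)$ is a polynomial in $\ln x$ and the series converges locally uniformly with all partials. For g), the fundamental theorem of calculus gives $\partial_zE=x^z/\Gamma(z+1)=\sum_n c_n(x)z^n$, after which e) with order $k-1$ yields the stated expansion of $\partial_z^kE$. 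For h), differentiating $c_n(x)=\sum_{k=0}^n c_{n-k}\ln^k(x)/k!$ and using $\tfrac{d}{dx}\ln^k(x)=k\ln^{k-1}(x)/x$ telescopes the sum to $c_{n-1}(x)/x$, while $\ln(1)=0$ leaves only the $k=0$ term, so $c_n(1)=c_n$. Item i) then follows by iterating the Euler operator $x\partial_x$, which by h) sends $c_n(x)\mapsto c_{n-1}(x)$ and annihilates $c_0$, applied to the series of f).

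The substantive part is j), and I would reduce both inequalities to a single pointwise bound on the integrand of $E(x,z)=\int_0^z x^t/\Gamma(t+1)\,dt$. If $1/\Gamma(t+1)\le e^{-\gamma t}$ held on $[0,z]$, then $x^t/\Gamma(t+1)\le e^{(\ln x-\gamma)t}$ and integration would give
$$E(x,z)\ \le\ \int_0^z e^{(\ln x-\gamma)t}\,dt\ =\ \frac{e^{(\ln x-\gamma)z}-1}{\ln x-\gamma}\ =\ \frac{x^ze^{-\gamma z}-1}{\ln x-\gamma},$$
which is the second clause, while the degenerate value $\ln x-\gamma=0$, i.e. $x=e^{\gamma}$, collapses the geometric integral to $\int_0^z 1\,dt=z$, giving the first clause $E(e^{\gamma},z)\le z$ (equivalently, the second bound tends to $z$ as $x\to e^{\gamma}$). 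Thus everything hinges on the pointwise inequality $\Gamma(t+1)\ge e^{\gamma t}$ for $t\in[0,z]$. To attack it I would use convexity of $t\mapsto\log\Gamma(t+1)$, whose second derivative is the trigamma function $\psi'(t+1)=\sum_{m\ge0}(t+1+m)^{-2}>0$, anchored at $t=0$ where $\log\Gamma(1)=0$ and the slope is $\psi(1)=-\gamma$. I expect this pointwise step to be the main obstacle: the tangent-line inequality for a convex function bounds $\log\Gamma(t+1)$ from below by its tangent $-\gamma t$, i.e. it yields $1/\Gamma(t+1)\le e^{\gamma t}$, so the sign of $\gamma$ coming out of the elementary convexity estimate is exactly what must be reconciled with the exponent $e^{-\gamma z}$ and the denominator $\ln x-\gamma$ of the statement. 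Settling that sign, and with it the precise degenerate and equality case, is where I would concentrate the verification, since the integration step is immediate once the pointwise bound is secured in the correct orientation.
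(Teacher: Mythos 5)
Your handling of identity (\ref{80124b}) and of items a)--i) coincides with the paper's own proof: the paper likewise derives (\ref{80124b}) from Wrench's formula (\ref{80124a}) together with $\Gamma(t+1)=t\Gamma(t)$ and the entirety of $1/\Gamma(t+1)$, obtains a) by differentiating $e^{g}$, b) by expanding the exponential as a power series, c) and d) from $x^{t}=e^{t\ln x}$, and e)--i) by term-by-term calculus on the entire series, with i) by iterating h). Up to that point nothing separates the two arguments.

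The obstacle you flagged in j) is real, and it is not a failure of your argument: you have located a genuine error in the paper. The paper's proof of j) asserts, citing \cite{a,q}, that $\Gamma(t+1)\geq e^{\gamma t}$ for $t\geq 0$, equivalently $1/\Gamma(t+1)\leq e^{-\gamma t}$, and then integrates exactly as you describe. But the classical inequality is the one your tangent-line computation produces: $\log\Gamma(t+1)$ is convex with value $0$ and slope $\psi(1)=-\gamma$ at $t=0$, so $\Gamma(t+1)\geq e^{-\gamma t}$, i.e.\ $1/\Gamma(t+1)\leq e^{\gamma t}$. The inequality the paper invokes already fails at $t=1$, where $\Gamma(2)=1<e^{\gamma}\approx 1.78$, and in fact item j) itself is false, so no reconciliation of the sign is possible. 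Concretely, for $t\in(0,1)$ one has $e^{\gamma t}>1$ and $\Gamma(t+1)<1$, whence $E(e^{\gamma},1)=\int_{0}^{1}e^{\gamma t}/\Gamma(t+1)\,dt>1$, contradicting the first clause at $z=1$; and at $x=1$ the second clause would give $E(1,z)\leq(1-e^{-\gamma z})/\gamma<1/\gamma\approx 1.73$ for all $z$, while $E(1,z)\to\nu(1)\approx 2.2665$ (the value the paper itself uses in Theorem \ref{50124a}d). Your integration step, run with the correct pointwise bound $1/\Gamma(t+1)\leq e^{\gamma t}$, proves instead the repaired statement $E(e^{-\gamma},z)\leq z$ and, for $x\neq e^{-\gamma}$, $E(x,z)\leq (x^{z}e^{\gamma z}-1)/(\ln x+\gamma)$; the sign error then propagates to items e) and f) of Theorem \ref{2812b}, which must be corrected accordingly (e.g.\ e) becomes $\rho(e^{\gamma},\tfrac{2}{(z-1)^{2}},z)\leq(z-1)e^{\gamma z}$). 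In short: your proof of a)--i) is the paper's proof; for j) you rightly refused to push the wrong-signed estimate through, and what fails is the statement, not your approach.
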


\begin{proof}
Identity (\ref{80124b}) follows from the following facts: \  i)  $\displaystyle \frac{1}{\Gamma(t+1)} \ $ is an entire function as it is the composition of the entire functions $\ \displaystyle \frac{1}{\Gamma(t)} \ $ and $\ t+1;  $ \ ii) $ \ \displaystyle \mathrm{exp}\big({\sum_{k=1}^{\infty}(-1)^{k+1}\frac{\widehat{\zeta}(k)}{k}z^k}\big) \ $ is an entire function since it is a power series that converges to $\ \displaystyle \frac{1}{z\Gamma(z)} \ $ for $\ z\neq 0,\ $ and to $\ 1 \ $ for $\ z=0;\ $ iii) identity (\ref{80124b}) holds for $\ \mathrm{Re}(z)>0 \ $ by
(\ref{80124a}) and $\ \Gamma(z+1)=z\Gamma(z). \ $

Item a) follows from (\ref{80124b}) taking a derivative. Item b) follows from (\ref{80124b}) developing the exponential function as a power series. Items c) and d) follow from (\ref{80124b}) and the identity $\ x^t=e^{t\mathrm{ln}(x)}. \ $ Items \ e), f), g), h) \ follow from the rules for the derivatives of polynomials and power series. Item i) is obtained by iterated applications of item h). For item j) we use \cite{a, q} that $\ \displaystyle \Gamma(t+1) \geq  e^{\gamma t} \ $ for $\ t\geq 0.\ $
We have that $$ E(e^{\gamma},z) \ = \
\int_{0}^{z} \frac{e^{\gamma t}}{\Gamma(t+1)}dt \ \leq \ \int_{0}^{z} e^{\gamma t}e^{-\gamma t } dt\ = \ \int_{0}^{z}dt\ = \ z.$$
 If $\ x\neq e^{\gamma}, \ $ then
 $$ E(x,z) \ = \
\int_{0}^{z} \frac{e^{\mathrm{ln}(x)t}}{\Gamma(t+1)}dt \ \leq \ \int_{0}^{z} e^{(\mathrm{ln}(x)-\gamma )t} dt\ = \ \frac{x^ze^{-\gamma z}-1}{\mathrm{ln}(x)-\gamma}.$$
\end{proof}

 A recursive formula for the Taylor coefficients at $\ t=0\ $ of $ \ \displaystyle \frac{1}{\Gamma(t)} \ $   was introduced by Bourgubt \cite{b}. Sakata in \cite{s} gives a formula for these coefficients using  $\ \gamma \ $ and the multiple zeta values $\  \displaystyle \zeta(k_1,...,k_r)=
\sum_{0<n_1<\cdots <n_k} \frac{1}{n_1^{k_1}\cdots n_r^{k_r}} \  $ with $\ k_i\geq 2.$\\

\begin{thm}{\label{2812b}\em The following properties hold for $ \rho:\mathbb{R}_{>0}\times \mathbb{R}_{>0} \times \mathbb{R}_{> 1} \rightarrow \mathbb{R}  $ given by (\ref{2012b}).
\begin{description}
  \item[a)] $\displaystyle \rho(x,y,z) \ = \ x^zE(\frac{y(z-1)^2}{2x},z-1)\ = \  x^z\sum_{n=1}^{\infty}c_{n-1}\big(\frac{y(z-1)^2}{2x} \big)\frac{(z-1)^n}{n} $ \\
       is a smooth function.

  \item[b)] $\displaystyle y\frac{\partial \rho}{\partial y}\  = \
  x^{z}\sum_{n=2}^{\infty}c_{n-2}\big(\frac{y(z-1)^2}{2x} \big)\frac{(z-1)^{n}}{n}.$

  \item[c)]  $\displaystyle x\frac{\partial \rho}{\partial x} \ + \ y \frac{\partial \rho}{\partial y}\ = \ z\rho.$

  \item[d)] $\displaystyle
   \frac{\partial \rho}{\partial z} \ = \
  \mathrm{ln}(x)\rho \ +\ \frac{2y}{z-1} \frac{\partial \rho}{\partial y} \ + \ x^z\sum_{n=0}^{\infty}
  c_{n}\big(\frac{y(z-1)^2}{2x}\big)\ (z-1)^{n}.$

  \item[e)] $\displaystyle \rho(e^{-\gamma},\frac{2}{(z-1)^2},z) \ \leq
  (z-1)e^{-\gamma z}. $

  \item[f)] $\displaystyle \rho(x,\frac{2}{(z-1)^2},z) \ \leq \
  \frac{x^{z}-xe^{\gamma(1-z)}}{\mathrm{ln}(x)+\gamma}  \ \ \ $
  for $ \ \ \ \displaystyle x \neq e^{-\gamma}.$

\end{description}
}
\end{thm}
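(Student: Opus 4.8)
The plan is to treat all six items as consequences of a single reduction: writing $w=\frac{y(z-1)^2}{2x}$, every statement about $\rho$ becomes a statement about $E(w,z-1)$ that is already supplied by Proposition \ref{2812a}. First I would dispatch item a) directly from formula (\ref{2012b}): since $x,y>0$ and $z>1$ force $w>0$ and $z-1>0$, the pair $(w,z-1)$ lies in the region where $E$ is smooth (Proposition \ref{2812a} f)), the maps $(x,y,z)\mapsto(w,z-1)$ and $(x,z)\mapsto x^z=e^{z\ln x}$ are smooth on the domain, and the power series is obtained by substituting the series for $E$ from Proposition \ref{2812a} f). Smoothness of $\rho$ then follows because it is a product and composition of smooth functions.

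For item b) I would differentiate the series of a) term by term in $y$. The key observations are $y\,\partial_y w=w$, so that $y\,\partial_y\big(c_{n-1}(w)\big)=w\,c_{n-1}'(w)$, together with Proposition \ref{2812a} h) rewritten as $w\,c_{n-1}'(w)=c_{n-2}(w)$ for $n\ge 2$; the $n=1$ term drops out because $c_0$ is constant. Item c) I would obtain most cleanly from homogeneity: since $w$ is invariant under $(x,y)\mapsto(\lambda x,\lambda y)$ and $x^z$ scales by $\lambda^z$, we have $\rho(\lambda x,\lambda y,z)=\lambda^z\rho(x,y,z)$, and Euler's relation for functions homogeneous of degree $z$ in $(x,y)$ yields c). Alternatively one computes $x\,\partial_x\rho$ using $x\,\partial_x w=-w$ and adds the result of b).

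The genuinely delicate step, and the one I expect to be the main obstacle, is item d), the derivative in $z$, because $z$ enters $\rho=x^zE(w,z-1)$ through three channels simultaneously: the prefactor $x^z$, the upper limit $z-1$ of the defining integral, and the base $w$ of the integrand. I would separate these carefully. The prefactor contributes $\ln(x)\,\rho$. The dependence through the second argument of $E$ contributes $x^z\,\partial_2E(w,z-1)=x^z\sum_{n=0}^{\infty}c_n(w)(z-1)^n$ by Proposition \ref{2812a} g) (with $k=1$). The dependence through the first argument contributes $x^z\,\partial_1E(w,z-1)\,\partial_z w$; rather than expand this independently I would identify it with a multiple of $\partial_y\rho$, using $\partial_y\rho=x^z\,\partial_1E(w,z-1)\,\partial_y w$ and the ratio $\partial_z w/\partial_y w=\frac{2y}{z-1}$, so that this term equals $\frac{2y}{z-1}\,\partial_y\rho$. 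Summing the three contributions gives the stated identity.

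Finally items e) and f) are specializations of Proposition \ref{2812a} j). Substituting $y=\frac{2}{(z-1)^2}$ simplifies $w$ to $1/x$ in general, and further to $e^{\gamma}$ when $x=e^{-\gamma}$. For e) this gives $\rho=e^{-\gamma z}E(e^{\gamma},z-1)\le e^{-\gamma z}(z-1)$ by the first bound of Proposition \ref{2812a} j). For f) I would apply the second bound of Proposition \ref{2812a} j) with first argument $1/x$, valid precisely when $1/x\ne e^{\gamma}$, i.e.\ $x\ne e^{-\gamma}$, and then multiply through by $x^z>0$; the only care needed is the exponent bookkeeping $x^z\cdot x^{-(z-1)}=x$ and $\ln(1/x)-\gamma=-(\ln x+\gamma)$, which converts the bound into the stated form.
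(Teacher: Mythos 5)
Your proposal is correct and follows essentially the same route as the paper, which simply reduces every item to Proposition \ref{2812a} (parts f), g), h), j)) via the identity $\rho(x,y,z)=x^zE\big(\tfrac{y(z-1)^2}{2x},z-1\big)$; your computations of the three $z$-channels in d) and the exponent bookkeeping in f) are exactly the details the paper leaves implicit, and they check out. The only cosmetic difference is your use of Euler's relation for homogeneous functions in c), where the paper computes $\partial\rho/\partial x$ directly, but the two arguments are interchangeable.
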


\begin{proof}
Item a) follows from  Proposition \ref{2812a}f. For items b), c), d) compute $\ \displaystyle \frac{\partial \rho}{\partial y}, \ \frac{\partial \rho}{\partial x}, \ \frac{\partial \rho}{\partial z} \ $ using Proposition \ref{2812a}f and Proposition \ref{2812a}h.
For items \ e),  f) \  apply  Proposition \ref{2812a}j.
\end{proof}

The function $\ \Gamma(t+1) \ $ is strictly convex on $\ \mathbb{R}_{\geq 0}. \ $  Since $\ \Gamma(0+1)=1=\Gamma(1+1), \ $ its unique minimum its achieved at a point $\ a\in (0,1). \ $ It is known \cite{dc} that $\ a\approx  0.4616 ,\ $ while
the minimum value of $\ \Gamma(t+1) \ $ is $\ m=\Gamma(a+1)\approx 0.8856.  \ $

\begin{prop}\label{20124}{\em  \
\begin{description}
  \item[a)] If $\ x\geq 1 \ $ and $\ \lfloor z\rfloor \geq 2,\ $  then
  $$\frac{1}{x}\big(e_{\lfloor z \rfloor }(x)-1\big) \ \leq \  E(x,z) \  \leq \ x\big(e_{\lceil z \rceil -1}(x)+\frac{1-m}{m}\big).$$
  \item[b)] If $\ x\geq 1, \ $  then
  $\ \ \  \displaystyle \frac{1}{x}\big(e^x-1\big)  \ \leq \  \nu(x) \  \leq \ x\big(e^x +\frac{1-m}{m}\big).$

  \item[c)] If $\ 0<x<1 \ $ and $\ \lfloor z\rfloor \geq 2,\ $   then $$   e_{\lfloor z\rfloor}(x)-1 \ \leq \  E(x,z) \ \leq \ e_{\lceil z\rceil-1}(x) + \frac{1-m}{m}.$$

  \item[d)] If $\ 0<x<1, \ $  then $\ \ \  \displaystyle  e^{x}-1 \ \leq \  \nu(x) \ \leq \ e^x+\frac{1-m}{m}.$
\end{description}
  }
\end{prop}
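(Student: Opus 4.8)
The plan is to estimate $E(x,z)=\int_0^z \frac{x^t}{\Gamma(t+1)}\,dt$ window by window, cutting the range of integration at the integers and matching the pieces against the partial sums $e_n(x)=\sum_{k=0}^n x^k/k!$. Since the integrand is positive, lowering the upper limit from $z$ to $\lfloor z\rfloor$ only decreases $E(x,z)$ and raising it to $\lceil z\rceil$ only increases it, so it suffices to sandwich each $\int_{k-1}^{k}\frac{x^t}{\Gamma(t+1)}\,dt$. First I would record two monotonicity facts for $\Gamma(t+1)$ on a unit window. For $k\ge 2$ and $t\in[k-1,k]$ the argument $t+1$ lies in $[k,k+1]\subseteq[2,\infty)$, where $\Gamma$ is increasing, so $\Gamma(k)\le\Gamma(t+1)\le\Gamma(k+1)$ and hence
$$\frac{1}{k!}=\frac{1}{\Gamma(k+1)}\ \le\ \int_{k-1}^{k}\frac{dt}{\Gamma(t+1)}\ \le\ \frac{1}{\Gamma(k)}=\frac{1}{(k-1)!}.$$
On the first window $k=1$, where $t+1\in[1,2]$, the function $\Gamma$ equals $1$ at both endpoints and dips to its minimum $m=\Gamma(a+1)$ in between, so $1\le\int_0^1\frac{dt}{\Gamma(t+1)}\le\frac1m$.

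For $x\ge 1$ (parts a, b) the factor $x^t$ is nondecreasing, so on $[k-1,k]$ one has $x^{k-1}\le x^t\le x^k$. Combining the left inequality with the lower $\Gamma$-estimate yields $\int_{k-1}^{k}\frac{x^t}{\Gamma(t+1)}\,dt\ge x^{k-1}/k!$, and summing $k=1,\dots,\lfloor z\rfloor$ gives $\sum_{k=1}^{\lfloor z\rfloor}x^{k-1}/k!=\frac1x\big(e_{\lfloor z\rfloor}(x)-1\big)$. For the upper estimate the windows $k\ge 2$ contribute at most $x^k/(k-1)!=x\cdot x^{k-1}/(k-1)!$, which reassembles to $x(e_{\lceil z\rceil-1}(x)-1)$, while the isolated first window contributes at most $x/m$; together these give $x\big(e_{\lceil z\rceil-1}(x)+\frac{1-m}{m}\big)$. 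For $0<x<1$ (parts c, d) the bounds on $x^t$ flip to $x^k\le x^t\le x^{k-1}$, and the identical bookkeeping — now pairing $x^k$ with the lower $\Gamma$-estimate and $x^{k-1}$ with the upper one — produces $e_{\lfloor z\rfloor}(x)-1\le E(x,z)\le e_{\lceil z\rceil-1}(x)+\frac{1-m}{m}$. Parts b) and d) follow by letting $z\to\infty$: the integrand decays super-exponentially, so $E(x,z)\to\nu(x)$, while both $e_{\lfloor z\rfloor}(x)$ and $e_{\lceil z\rceil-1}(x)$ tend to $e^x$, and the squeeze survives the limit.

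The one genuinely delicate point is the first window $[0,1]$, where $\Gamma(t+1)$ is not monotone and cannot be compared to a single factorial. This is precisely the origin of the correction term $\frac{1-m}{m}$ in the upper bounds, and it is handled by the global minimum $m$. Everything else is the routine resummation of the factorial series, together with the hypothesis $\lfloor z\rfloor\ge 2$, which guarantees that the windows $k=1$ and $k=2$ are both present and that the shifted index in $e_{\lceil z\rceil-1}$ is meaningful.
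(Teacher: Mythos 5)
Your proof is correct and follows essentially the same route as the paper: truncate the integral at $\lfloor z\rfloor$ and $\lceil z\rceil$, split into unit windows, bound $\Gamma(t+1)$ by its values at integer endpoints on the windows where it is monotone, handle the first window via the minimum $m$, and pass to the limit $z\to\infty$ for parts b) and d). The only difference from the paper's argument is the indexing of the windows.
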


\begin{proof} Items b) and d) follow from a) and c), respectively, taking the limit
$\ z \to \infty. \ $
\noindent $\displaystyle a_{\leq}) \ \ \ E(x,z)\ \leq \ \int_{0}^{\lceil z\rceil}\frac{x^t}{\Gamma(t+1)}dt \ = \  \int_{0}^{1}\frac{x^t}{\Gamma(t+1)}dt \ + \ \sum_{i=1}^{\lceil z\rceil-1}\int_{i}^{i+1}\frac{x^t}{\Gamma(t+1)}dt \ \leq $
$$  \frac{x}{m} + \sum_{i=1}^{\lceil z\rceil-1}\frac{x^{i+1}}{\Gamma(i+1)}dt \ = \
 x\big(e_{\lceil z\rceil-1}(x) + \frac{1-m}{m}\big).\ \ \ \ \ \ \ $$
\noindent $\displaystyle a_{\geq}) \ \ \ E(x,z)\ \geq \ \int_{0}^{\lfloor z\rfloor}\frac{x^t}{\Gamma(t+1)}dt \ = \  \int_{0}^{1}\frac{x^t}{\Gamma(t+1)}dt\ + \ \sum_{i=1}^{\lfloor z\rfloor-1}\int_{i}^{i+1}\frac{x^t}{\Gamma(t+1)}dt \ \geq $
$$  1 + \sum_{i=1}^{\lfloor z\rfloor-1}\frac{x^{i}}{\Gamma(i+2)}dt \ = \
 1 + \frac{1}{x}\sum_{i=1}^{\lfloor z\rfloor-1}\frac{x^{i+1}}{\Gamma(i+2)}dt \ = \
 1 + \frac{1}{x}\big(e_{\lfloor z\rfloor}(x) -x-1\big). \ \ \ \    $$
\noindent $\displaystyle c_{\leq}) \ \ \   E(x,z)\ \leq \  \int_{0}^{\lceil z\rceil}\frac{x^t}{\Gamma(t+1)}dt \ = \ \int_{0}^{1}\frac{x^t}{\Gamma(t+1)}dt \ + \ \sum_{i=1}^{\lceil z\rceil-1}\int_{i}^{i+1}\frac{x^t}{\Gamma(t+1)}dt \ \leq \ $
$$ \frac{1}{m}\ + \ \sum_{i=1}^{\lceil z\rceil-1}\frac{x^{i}}{\Gamma(i+1)}\ = \
e_{\lceil z\rceil-1}(x) \ + \ \frac{1-m}{m}.   \ \ \ \ \ \ \ \ \ \ \ \ $$

\noindent $\displaystyle c_{\geq}) \ \ \ E(x,z)\ \geq \ \int_{0}^{\lfloor z\rfloor}\frac{x^t}{\Gamma(t+1)}dt \ = \   \int_{0}^{1}\frac{x^t}{\Gamma(t+1)}dt\ + \ \sum_{i=1}^{\lfloor z\rfloor-1}\int_{i}^{i+1}\frac{x^t}{\Gamma(t+1)}dt \ \geq \ $
$$  x + \sum_{i=1}^{\lfloor z\rfloor-1}\frac{x^{i+1}}{\Gamma(i+2)}dt \ = \
 e_{\lfloor z\rfloor}(x)-1.  \ \ \ \ \ \ \ \ \ \ \  \ \ \ \ \ \ \ \ \ \ \ \ \  $$
\end{proof}



Next result follows from Proposition \ref{20124}.

\begin{thm}\label{40124b}{\em  \
\begin{description}
\item[a)] If $\ y(n-1)^2\geq 2x \ $ and $\ n \geq 3, \ $  then
$$ \frac{2x}{y(n-1)^2}\big(\widetilde{r}(x,y,n)  \ - \ x^n\big) \ \leq \   \rho(x,y,n)
 \  \leq \  \frac{y(n-1)^2}{2x}\big(\widetilde{r}(x,y,n)\ + \ \frac{1-m}{m}x^{n}\big).$$
\item[b)] If $\ y\geq 1, \ $  then
  $\displaystyle \ \ \ \frac{1}{y}\big(e^{y}-1\big)
 \ \leq \ \lim_{n \to \infty}\frac{\rho((n-1)^2,2y,n)}{(n-1)^{2n}}  \ \leq \
 y\big(e^{y} +\frac{1-m}{m}\big). $

  \item[c)] If $\ y(n-1)^2<2x \ $ and $\ n\geq 3, \ $  then
  $$ \widetilde{r}(x,y,n) -x^n\ \leq \  \rho(x,y,n) \ \leq \ \widetilde{r}(x,y,n) +\frac{1-m}{m}x^n.$$

  \item[d)] If $\ y<1, \ $  then $\ \ \ \displaystyle    e^{y}-1 \ \leq \ \lim_{n \to \infty}\frac{\rho((n-1)^2,2y,n)}{(n-1)^{2n}}  \ \leq \ e^{y}+\frac{1-m}{m}. $
\end{description}
}
\end{thm}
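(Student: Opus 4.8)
The plan is to read the whole statement off Proposition \ref{20124} through the change of variables supplied by (\ref{2012b}) and Theorem \ref{r4}. Writing $w=\frac{y(n-1)^2}{2x}$, formula (\ref{2012b}) gives $\rho(x,y,n)=x^nE(w,n-1)$, while Theorem \ref{r4} gives $\widetilde{r}(x,y,n)=x^ne_{n-1}(w)$. The coefficients appearing in the statement are exactly $\frac{2x}{y(n-1)^2}=\frac1w$ and $\frac{y(n-1)^2}{2x}=w$, so every inequality in (a) and (c) should come from applying the corresponding two-sided bound for $E$ at the point $w$ and height $z=n-1$, and then multiplying through by $x^n$.

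For part (a) I would observe that $y(n-1)^2\ge 2x$ is precisely $w\ge 1$ and that $n\ge 3$ is precisely $\lfloor n-1\rfloor=n-1\ge 2$, so Proposition \ref{20124}a applies with its $x$ replaced by $w$ and its $z$ replaced by the integer $n-1$. Since $n-1\in\mathbb{Z}$ we have $\lfloor n-1\rfloor=n-1$ and $\lceil n-1\rceil-1=n-2$, whence $\frac1w(e_{n-1}(w)-1)\le E(w,n-1)\le w(e_{n-2}(w)+\frac{1-m}{m})$. Multiplying by $x^n$ turns the lower bound into the stated one verbatim; for the upper bound I would weaken $e_{n-2}(w)\le e_{n-1}(w)$ (valid since $w\ge 0$) in order to replace $x^ne_{n-2}(w)$ by $\widetilde{r}(x,y,n)$, producing the claimed inequality. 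Part (c) is carried out identically, now with $w<1$ and Proposition \ref{20124}c in place of \ref{20124}a.

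Parts (b) and (d) are the limiting specializations at $x=(n-1)^2$ with Pochhammer parameter $2y$, for which $w=y$ and $\rho((n-1)^2,2y,n)=(n-1)^{2n}E(y,n-1)$, so that $\frac{\rho((n-1)^2,2y,n)}{(n-1)^{2n}}=E(y,n-1)$. Letting $n\to\infty$ the upper endpoint of integration tends to infinity, so by the very definition of the improper integral $E(y,n-1)\to\nu(y)$; applying Proposition \ref{20124}b when $y\ge 1$ and Proposition \ref{20124}d when $y<1$ then yields the two displayed chains of inequalities. I expect the only points needing care to be the bookkeeping of the floor and ceiling indices when $z=n-1$ is an integer, the harmless weakening $e_{n-2}\le e_{n-1}$ used to bring the upper bounds into the stated shape, and the justification that $\lim_{n\to\infty}E(y,n-1)=\nu(y)$, which is immediate from the convergence of the integral defining $\nu$.
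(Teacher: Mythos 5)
Your proposal is correct and follows essentially the same route as the paper: both deduce everything from Proposition \ref{20124} via the substitution $w=\frac{y(n-1)^2}{2x}$, $z=n-1$, use the elementary bound $e_{n-2}(w)\leq e_{n-1}(w)$ to convert the upper bounds into statements about $\widetilde{r}$, and obtain (b) and (d) from $\lim_{n\to\infty}E(y,n-1)=\nu(y)$. No gaps.
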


\begin{proof}
Item a) follows from Proposition \ref{20124}a using $\ \rho(x,y,n) \ = \ x^nE(\frac{y(n-1)^2}{2x},n-1)\ $ and
$$e_{n-2}(\frac{y(n-1)^2}{2x})\ = \ e_{n-1}(\frac{y(n-1)^2}{2x})-
\frac{ y^{n-1}(n-1)^{2(n-1)} }{ 2^{n-1}x^{n-1}(n-1)! }\ \leq \ e_{n-1}(\frac{y(n-1)^2}{2x}).$$
Item b) follows from Proposition \ref{20124}b and the fact that
$$ \lim_{n \to \infty}\frac{\rho((n-1)^2,2y,n)}{(n-1)^{2n}}\ = \
\lim_{n \to \infty}\frac{(n-1)^{2n}E(y,n-1)}{(n-1)^{2n}}\ = \ \nu(y).$$
Item c) follows from Proposition \ref{20124}c. \ \
Item d)  follows from  Proposition \ref{20124}d.
\end{proof}

Our final result provides asymptotic bounds for   $\ \rho(x,y,n) \ $ in some interesting cases.

\begin{thm}\label{50124a}
\label{30124c}{\em Let $\ n \in \mathbb{N}_{\geq 3}.$\
\begin{description}
\item[a)]   If $\ (n-1)^2y\geq 2x\geq 2, \ $  then
 $\ \ \displaystyle x\rho(x,y,n) \ \leq \ n^2y\widetilde{r}(x,y,n).$
\item[b)] If $\ y \geq 1, \ $ then   $\ \ \displaystyle \rho((n-1)^2,2y,n)=  O(n^{2n})\ \ $ as $\ \ n \to \infty.$
\item[c)] $\displaystyle \rho(1,\frac{2n}{(n-1)^2},n) =  O(ne^n) \ \ $ as $\ \ n \to \infty.$
\item[d)] $\displaystyle \lim_{n\to \infty}\rho(1,\frac{2}{(n-1)^2},n)\ = \  \nu(1)\  = \ \int_{0}^{\infty}\frac{1}{\Gamma(t+1)} \ \approx \ 2.2665 \ \ \ $ as $\ \ \ n \to \infty.$
\end{description}
}
\end{thm}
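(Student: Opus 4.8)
The plan is to reduce all four items to the single function $E(x,z)=\int_0^z \frac{x^t}{\Gamma(t+1)}\,dt$ by means of the identity $\rho(x,y,z)=x^zE\big(\frac{y(z-1)^2}{2x},z-1\big)$ recorded in (\ref{2012b}). The arguments appearing in the four statements are engineered precisely so that the first slot $\frac{y(z-1)^2}{2x}$ collapses to a clean value, after which the apparent complexity of $\rho$ evaporates. First I would record the three reductions: with $x=1,\ y=\frac{2n}{(n-1)^2},\ z=n$ one gets $\rho(1,\frac{2n}{(n-1)^2},n)=E(n,n-1)$; with $x=1,\ y=\frac{2}{(n-1)^2},\ z=n$ one gets $\rho(1,\frac{2}{(n-1)^2},n)=E(1,n-1)=\int_0^{n-1}\frac{dt}{\Gamma(t+1)}$; and the substitution used already in Theorem \ref{40124b} gives $\rho((n-1)^2,2y,n)=(n-1)^{2n}E(y,n-1)$.

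Item (a) is the only one requiring an estimate beyond a clean reduction, and I would treat it first. Its hypothesis $(n-1)^2y\ge 2x$ is exactly what is needed to invoke the upper bound of Theorem \ref{40124b}a, so I start from $\rho(x,y,n)\le\frac{y(n-1)^2}{2x}\big(\widetilde r(x,y,n)+\frac{1-m}{m}x^n\big)$ and multiply through by $x>0$ to obtain $x\rho(x,y,n)\le\frac{y(n-1)^2}{2}\big(\widetilde r(x,y,n)+\frac{1-m}{m}x^n\big)$. Since $\widetilde r(x,y,n)=x^ne_{n-1}\big(\frac{y(n-1)^2}{2x}\big)\ge x^n$ (the constant term of $e_{n-1}$ is $1$ and the argument is positive), the error term is absorbed: $\widetilde r+\frac{1-m}{m}x^n\le\frac1m\widetilde r$, whence $x\rho(x,y,n)\le\frac{y(n-1)^2}{2m}\widetilde r(x,y,n)$. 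It then suffices to check the numerical inequality $\frac{(n-1)^2}{2m}\le n^2$, which holds for all $n\ge3$ because $2m\approx1.77>1$ forces $(n-1)^2\le n^2\le 2mn^2$; this yields the claim $x\rho(x,y,n)\le n^2y\,\widetilde r(x,y,n)$.

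The remaining items are essentially immediate from the reductions. For (b), Theorem \ref{40124b}b shows $\lim_{n\to\infty}\frac{\rho((n-1)^2,2y,n)}{(n-1)^{2n}}=\nu(y)$, a finite constant for $y\ge1$, so $\rho((n-1)^2,2y,n)\sim\nu(y)(n-1)^{2n}$, and since $(n-1)^{2n}\le n^{2n}$ this is $O(n^{2n})$. For (c), I apply the upper bound of Proposition \ref{20124}a to $E(n,n-1)$ — legitimate since $n\ge1$ and $\lfloor n-1\rfloor\ge2$ for $n\ge3$ — to get $E(n,n-1)\le n\big(e_{n-2}(n)+\frac{1-m}{m}\big)$, and then bound $e_{n-2}(n)=\sum_{k=0}^{n-2}\frac{n^k}{k!}\le e^n$, giving $\rho(1,\frac{2n}{(n-1)^2},n)=E(n,n-1)\le ne^n+\frac{1-m}{m}n=O(ne^n)$. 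For (d), the reduction identifies $\rho(1,\frac{2}{(n-1)^2},n)=\int_0^{n-1}\frac{dt}{\Gamma(t+1)}$, and since $\frac1{\Gamma(t+1)}$ is positive and decays super-exponentially this improper integral converges to $\nu(1)=\int_0^\infty\frac{dt}{\Gamma(t+1)}\approx2.2665$ as $n\to\infty$.

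I do not anticipate a genuine obstacle: every item is a consequence of Theorem \ref{40124b}, Proposition \ref{20124}, and the reduction to $E$. The only point demanding care is the bookkeeping in (a), namely verifying that the constant $\frac{1-m}{m}$ from Proposition \ref{20124} can be absorbed into $\widetilde r$ via $x^n\le\widetilde r(x,y,n)$ and that the residual inequality $(n-1)^2\le 2mn^2$ really holds for all $n\ge3$. A secondary subtlety worth stating explicitly in (b) is that one must invoke the \emph{existence} of the finite limit $\nu(y)$, not merely the two-sided bounds of Theorem \ref{40124b}b, in order for the big-$O$ conclusion to be legitimate.
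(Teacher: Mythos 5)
Your proposal is correct, and item a) is essentially identical to the paper's argument: both start from the upper bound of Theorem \ref{40124b}a, absorb the $\frac{1-m}{m}x^n$ term using $\widetilde r(x,y,n)\geq x^n$, and finish with $\frac{(n-1)^2}{2m}\leq n^2$ (the paper phrases this as $m\geq\frac12$). Item d) also coincides with the paper's reduction to $E(1,n-1)\to\nu(1)$. Where you genuinely diverge is in items b) and c). The paper deduces both from item a) of the theorem itself combined with the asymptotics of $\widetilde r$ established in Theorem \ref{30124b}: for b) it uses $\widetilde r((n-1)^2,2y,n)\sim(n-1)^{2n}(e^y-\frac{y^n}{\Gamma(n+1)})$, and for c) it uses $\widetilde r(1,\frac{2n}{(n-1)^2},n)\sim\frac12 e^n$. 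You instead bypass $\widetilde r$ entirely: for b) you invoke the existence of the finite limit $\nu(y)$ (which the paper does assert in proving Theorem \ref{40124b}b, though the statement of that theorem only records two-sided bounds on the limit — your remark that existence must be invoked, or else the non-asymptotic bound $E(y,n-1)\leq\nu(y)$ used, is a fair point of care); for c) you apply Proposition \ref{20124}a directly to $E(n,n-1)$ and bound $e_{n-2}(n)\leq e^n$. Your route is slightly more self-contained at the level of the $E$-function and avoids the incomplete-gamma asymptotics of Theorem \ref{30124b}, while the paper's route showcases how the bounds on $\rho$ propagate through the $\widetilde r$ asymptotics; both are valid and of comparable length.
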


\begin{proof}
For item a) note first that
$$\ \widetilde{r}(x,y,n)=  x^ne_{n-1}\big(\frac{y(n-1)^2}{2x}\big)  \geq  x^n \ \ \ \
\text{and} \ \ \ \ m \approx 0.8856 \geq\frac{1}{2}. \ $$ From Theorem \ref{40124b}a we get
$$ \frac{x}{y}\rho(x,y,n)\ \leq \ \frac{(n-1)^2}{2}\big(\widetilde{r}(x,y,n)\ + \ \frac{1-m}{m}x^{n}\big)
\ \leq $$
$$ \frac{(n-1)^2}{2}\big(\widetilde{r}(x,y,n)\ + \ \frac{1-m}{m}\widetilde{r}(x,y,n)\big) \ = \ $$
$$\frac{(n-1)^2}{2m}\widetilde{r}(x,y,n) \ \leq \ (n-1)^2\widetilde{r}(x,y,n)\ \leq \  n^2\widetilde{r}(x,y,n).$$
Items  b) and c)  are obtained from item a) and Theorem \ref{30124b} as follows:\\

\noindent b) $\ y \geq 1 \ $ implies the conditions of item a) hold, thus
$$ (n-1)^2\rho((n-1)^2,2y,n)\ \leq \ 2n^2y\widetilde{r}((n-1)^2,2y,n), \ \ \ \text{and by Theorem \ref{30124b}a}$$
$$\rho((n-1)^2,2y,n)\ = \ O(\frac{n^2}{(n-1)^2}(n-1)^{2n}2y(e^y-\frac{y^n}{n!})) \ = $$ $$O((n-1)^{2n})\ = \ O(n^{2n}).$$

\noindent c)  Conditions of item a) hold, so
$$ \rho(1,\frac{2n}{(n-1)^2},n)\ \leq \  \frac{2n^3}{(n-1)^2}\widetilde{r}(1,\frac{2n}{(n-1)^2},n),  $$
thus by Theorem \ref{30124b}e we get that
$$\rho(1,\frac{2n}{(n-1)^2},n)\ = \ O(n\widetilde{r}(1,\frac{2n}{(n-1)^2},n))\ = \ O(n\frac{e^n}{2})\ = \
O(ne^n).$$

\noindent d)  We have that $$ \lim_{n\to \infty}\rho(1,\frac{2}{(n-1)^2},n)\ = \
 \lim_{n\to \infty}E(\frac{2(n-1)^2}{2(n-1)^2},n-1) \ = \ $$
 $$ \lim_{n\to \infty}E(1,n-1) \ = \ \nu(1)\ = \  \int_{0}^{\infty}\frac{1}{\Gamma(t+1)} \ \approx \ 2.2665.$$
See \cite{oe} for the approximated value of $\ \nu(1).$
\end{proof}

\

\noindent ragadiaz@gmail.com \\
\noindent Escuela de Matem\'aticas, Universidad Nacional de Colombia - Sede Medell\'in, Colombia


\begin{thebibliography}{10}

\bibitem{a}
M. Abramowitz, I. A. Stegun (Eds.), Handbook of Mathematical Functions with Formulas, Graphs, and
Mathematical Tables, Applied Mathematics Series 55, National Bureau of Standards, Washington 1970.

\bibitem{aar}
G. Andrews, R. Askey, R. Roy, Special Functions, Encyclopedia Math. Appl. 71, Cambridge Univ. Press, Cambridge 1999.

\bibitem{ba}
H. Bateman, A. Erd\'elyi, W. Magnus, F. Oberhettinger, F. Tricomi, Higher transcendental functions 3,  McGraw-Hill, New York 1955, pp. 217-224.


\bibitem{bar}
A. Barvinok, Integer Points in Polyhedra, Euro. Math. Soc, Z\"{u}rich 2008.

\bibitem{blve}
N. Berline, M. Vergne, Local Euler-Maclaurin formula for polytopes,  Moscow Math. J. 7 (2007) 355-386.

\bibitem{bd}
H. Bland\'in, R. D\'iaz, Rational combinatorics, Adv. Appl. Math. 40 (2008)  107-126.

\bibitem{b}
L. Bourgubt, Sur les intégrales Euleriennes et quelques autres fonctions uniformes,
Acta Math. 2 (1883) 261-295.

\bibitem{cc}
L. Cano, S. Carrillo, Can we detect Gaussian Curvature by Counting Paths and Measuring their Lengths ?, Revista Integración 38 (2020) 33-42.

\bibitem{cd1}
L. Cano, R. D\'iaz, Indirect Influences on Directed Manifolds, Adv. Stud. Contemp. Math. 28 (2018) 93-114.


\bibitem{dc}
W. Deming, C. Colcord, The Minimum in the Gamma Function, Nature 135 (1935)  917.

\bibitem{dr}
R. Diaz, S. Robins, The Ehrhart Polynomial of a Lattice Polytope, Ann. Math. 145 (1997) 503-518.

\bibitem{d}
R. D\'iaz, On the Categorification of the M$\ddot{\mathrm{o}}$bius Function, in Adv. Math. Research. 26, Nova Science, Hauppauge 2019.

\bibitem{cd2}
R. D\'iaz, L. Cano,  Continuous Analogues for the
Binomial Coefficients and the Catalan Numbers, Appl. Anal. Discrete
Math. 13 (2019) 542-568.

\bibitem{dp1}
R. D\'iaz, E. Pariguan, On hypergeometric functions and Pochhammer $k$-symbol,
Divulg. Mat. 15 (2007) 179–192.

\bibitem{ehrhart}
E. Ehrhart, Sur les polyèdres rationnels homothétiques à $n$ dimensions, C. R. Acad. Sci. 254 (1962) 616–618.

\bibitem{g}
I. Gradshteyn, I. Ryzhik, Table of Integrals, Series, and Products, Academic Press, Cambridge 2007, pp. 1043.

\bibitem{r}
J. Kung (Ed.), Gian-Carlo Rota on Combinatorics,
Birkh$\ddot{\mbox{a}}$user, Boston and Basel 1995.

\bibitem{n}
G. Nemes, The resurgence properties of the incomplete gamma function I.,  Anal. Appl. Singap. 14 (2016) 631–677.

\bibitem{ni}
NIST Digital Library of Mathematical Functions, https://dlmf.nist.gov/8.7, https://dlmf.nist.gov/8.11.

\bibitem{lrvw}
Q.-N. Le, S. Robins, C. Vignat, T. Wakhare, A Continuous Analogue of Lattice Path Enumeration, Electron. J. Comb.
26 (2019) $\sharp$P3.571.


\bibitem{q}
F. Qi, Bounds for the Ratio of Two Gamma Functions,  J. Inequal. Appl. (2010) 493058.



\bibitem{o}
R. O'Dwyer, Relativistics Propagators on Lattices, arxiv:2309.09535.

\bibitem{oe}
The On-Line Encyclopedia of Integer Sequences, https://oeis.org/A247377.

\bibitem{ol}
F. Olver, Asymptotics and Special Functions, Academic Press, London and New
York 1974.



\bibitem{s}
M. Sakata, Taylor series for the reciprocal gamma function and multiple zeta values,
 Proc. Japan Acad. Ser. A  93 (2017) 47-49.

\bibitem{vw}
 C. Vignat, T. Wakhare,  A continuous analogue of lattice path enumeration: Part II, Online J. Anal. Comb. 14 (2019) 4.


\bibitem{w}
J. Wrench, Concerning Two Series for the Gamma Function, Math. Comput. 22 (1968) 617-626.

\end{thebibliography}
\end{document}